\newtheorem{thm}{Theorem}[section]
\newtheorem{lem}[thm]{Lemma}
\theoremstyle{definition}
\newtheorem{defn}[thm]{Definition}
\newtheorem{exm}[thm]{Example}
\newtheorem{rem}[thm]{Remark}
\newtheorem{conj}[thm]{Conjecture}
\numberwithin{equation}{section}
\DeclareMathOperator{\pd}{pd}
\DeclareMathOperator{\lcm}{lcm}
\DeclareMathOperator{\supp}{supp}
\DeclareMathOperator{\Tor}{Tor}
\DeclareMathOperator{\level}{level}
\DeclareMathOperator{\reg}{reg}
\def\k {\mathrm{k}}
\begin{document}

\title{Projective dimension and regularity of 3-path ideals of unicyclic graphs}

\author{Nguyen Thu Hang}
\address{Thai Nguyen University of Sciences, Tan Thinh Ward, Thai Nguyen City, Thai Nguyen, Vietnam}
\email{hangnt@tnus.edu.vn}

\author{Thanh Vu}
\address{Institute of Mathematics, VAST, 18 Hoang Quoc Viet, Hanoi, Vietnam}
\email{vuqthanh@gmail.com}

\subjclass[2020]{05E40, 13D02, 13F55}
\keywords{t-path ideals; Betti numbers; unicyclic graphs}

%\subjclass{13D45, 05C90, 05E40, 05E45.}
%\keywords{Matroid, arboricity, Stanley-Reisner ideal, regularity.}
\date{}

%\dedicatory{Dedicated to Professor Ngo Viet Trung on the occasion of his 70th birthday}
\commby{}
%-----------------------------------------------------------
% -----------------------------------------------------------
\maketitle
% -----------------------------------------------------------
\begin{abstract}
    We compute the projective dimension and regularity of $3$-path ideals of arbitrary graphs having at most one cycle.
\end{abstract}

\maketitle

\section{Introduction}
\label{sect_intro}
Let $S = \k[v \mid v \in V]$ be a standard graded polynomial ring over an arbitrary field $\k$ on a set of indeterminates $V$. Let $G$ be a graph on the vertex set $V(G) = V$ and edge set $E(G) \subset V(G) \times V(G)$. Conca and De Negri \cite{CD} introduced the $t$-path ideal of $G$. Note that $I_2(G)$ is the usual edge ideal of $G$. When $G$ is a tree, they showed that the generators of $I_t(G)$ form an $M$-sequence. Jacques \cite{J} described the Betti numbers of the edge ideals of cycles and gave a recursive formula for the Betti numbers of edge ideals of forests. For the $t$-path ideals, Erey \cite{E} gave a formula for the Betti numbers of path ideals of line and star graphs, Alilooee and Faridi \cite{AF2} gave a formula for the Betti numbers of path ideals of line and cycle graphs. Bouchat, Ha, and O'Keefe \cite{BHO}, He and Van Tuyl \cite{HV}, Kiani and Madani \cite{KM}, and Bouchat and Brown \cite{BB} obtained some partial results for the Betti numbers of directed rooted trees. We note that the path ideals of a directed rooted tree are generally smaller than those of the underlying undirected trees associated with it. Banerjee \cite{B} proved that when $G$ is gap-free, one has $\reg (S/I_3(G)) \le \max \{\reg (S/I_2(G)),2\}$. 

Note that if $G = G_1 \cup G_2$ where $G_1$ and $G_2$ are disjoint, then $I_t(G) = I_t(G_1) + I_t(G_2)$ is the sum of ideals in a different set of variables. By \cite{NV2}, we can deduce homological invariants of (powers) of $I_t(G)$ from those of $I_t(G_1)$ and $I_t(G_2)$. Thus, we may assume that $G$ is connected. Even when $G$ is a tree, we do not have a complete answer for the projective dimension and regularity of $I_t(G)$. 

Recently, Kumar and Sarkar \cite{KS} showed that $\reg (S/I_3(G)) = 2 \nu_3(G)$ when $G$ is a tree and when $G$ has a unique cycle then
$$2 \nu_3(G) \le \reg \left ( S/I_3(G) \right ) \le 2 \nu_3(G) + 2$$ 
where $\nu_3(G)$ is the $3$-path induced matching number of $G$. Our motivation in this work is to give a formula for the regularity of $I_3(G)$ when $G$ has a unique cycle in terms of the $3$-path induced matching number of $G$.

We now describe our main results. All graphs in this work are undirected simple graphs. We assume that $G$ has at most one cycle $C$ of length $m$ on the vertices $v_1, \ldots, v_m$. When $G$ is a tree, we set $m = 1$. We first establish certain Betti splittings for the $3$-path ideals of $G$. We then deduce efficient recursive formulae for the projective dimension and regularity of $I_3(G)$, see Theorems \ref{thm_pd_1}, \ref{thm_pd_2}, \ref{thm_reg_1}, \ref{thm_reg_2} for more details. Our results are new even when $G$ is a tree. From that, we deduce a formula for the regularity of $I_3(G)$ in terms of its $3$-path induced matching number, completing the work of Kumar and Sarkar \cite{KS}. 

Let $\partial(C) = N(C) \setminus C$, the boundary of $C$ in $G$, be the set of neighbors of vertices of $C$ that are not in $C$. Let $\Gamma_G(C)$ be the induced subgraph of $G$ on $V(G) \setminus \partial(C)$. Motivated by a result of Alilooee, Beyarslan, and Selvaraja \cite[Corollary 3.9]{ABS}, we prove:
\begin{thm}\label{thm_reg} Assume that $G$ is a connected simple graph with at most one cycle $C$ of length $m$. Then 
$$\reg (S/I_3(G)) = \begin{cases} 2 \nu_3 (G) + 2, & \text{ if } m > 3, m \equiv  3 \pmod 4 \text{ and } \nu_3(G) = \nu_3(\Gamma_G(C)),  \\
2 \nu_3(G) + 1, & \text{ if } m > 3, m \equiv 2 \pmod 4 \text{ and } \nu_3(G) = \nu_3(\Gamma_G(C)), \\
2 \nu_3(G), & \text{ otherwise}.    
\end{cases}$$    
\end{thm}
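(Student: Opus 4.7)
I would run an induction on $|V(G)|$ using the recursive formulae in Theorems \ref{thm_reg_1} and \ref{thm_reg_2} for $\reg(S/I_3(G))$, together with a parallel recursion for $\nu_3(G)$. The argument splits into three cases: $G$ is a tree, where Kumar--Sarkar \cite{KS} already gives $\reg(S/I_3(G)) = 2\nu_3(G)$, which matches the ``otherwise'' clause since $m = 1$ by convention; $G$ is the bare cycle $C_m$; and $G$ has a unique cycle $C$ of length $m$ with at least one pendant edge attached.

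For the base case $G = C_m$, one can compute $\reg(S/I_3(C_m))$ directly from the Betti numbers of cycle path ideals in \cite{AF2}, or by specializing Theorem \ref{thm_reg_2}, and similarly check that $\nu_3(C_m) = \lfloor m/4 \rfloor$ for $m \geq 4$, while $\nu_3(C_3) = 1$. The invariance condition $\nu_3(G) = \nu_3(\Gamma_G(C))$ is automatic here, since $\partial(C) = \emptyset$ and hence $\Gamma_G(C) = C_m$. Verifying the three subcases then reduces to an elementary mod-$4$ calculation matching $\reg(S/I_3(C_m)) - 2\nu_3(C_m) \in \{0,1,2\}$ to the residue of $m$ modulo $4$.

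For the inductive step, I would pick a pendant tree $T$ rooted at a vertex $w \in \partial(C)$ and apply the Betti splittings of Theorems \ref{thm_reg_1} and \ref{thm_reg_2}. These express $\reg(S/I_3(G))$ as a maximum over regularities of strictly smaller graphs, each being either a forest or a unicyclic graph with the same cycle $C$ but reduced pendant structure, shifted by $+0$, $+1$, or $+2$. The induction hypothesis computes each term via its own $\nu_3$, and a separate combinatorial argument expresses $\nu_3(G)$ as the corresponding maximum of $\nu_3$'s of the pieces. The main obstacle is the bookkeeping around the condition $\nu_3(G) = \nu_3(\Gamma_G(C))$: a reduction that excises part of $\partial(C)$ can preserve that equality in $G$ while destroying it in the subgraph (or conversely), so matching the $+1$/$+2$ excess correctly will likely require an auxiliary invariant --- for instance, whether an optimal induced $3$-path matching in $G$ can be chosen to lie entirely within $\Gamma_G(C)$ --- to keep the case analysis finite and the induction clean.
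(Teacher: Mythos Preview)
Your overall strategy matches the paper's: induction on $|V(G)\setminus V(C)|$, with the tree case and the bare-cycle case as bases, and Theorems~\ref{thm_reg_1}--\ref{thm_reg_2} together with Lemmas~\ref{lem_mat1}--\ref{lem_mat2} driving the recursion. You also correctly isolate the crux of the argument: controlling what happens to the proximality condition $\nu_3(G)=\nu_3(\Gamma_G(C))$ when one passes to the subgraphs $G_{z_0,j}$.

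The gap is that you leave this crux unresolved. Your proposed ``auxiliary invariant'' --- whether an optimal $3$-path induced matching can be taken inside $\Gamma_G(C)$ --- is just a rephrasing of $3$-proximality itself, so it adds no leverage. The paper closes the gap with two concrete moves. First, in the proximal case it does \emph{not} argue by induction: since $\Gamma_G(C)$ is an induced subgraph of $G$ equal to the disjoint union of $C$ and a forest $F$, Lemma~\ref{lem_induced_sub} gives directly $\reg(S/I_3(G))\ge \reg(S/I_3(C))+2\nu_3(F)$, and proximality turns the right-hand side into $2\nu_3(G)+\bigl(\reg(S/I_3(C))-2\nu_3(C)\bigr)$, matching the general upper bound established inductively. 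Second, in the non-proximal case the paper supplies the missing combinatorial input, Lemmas~\ref{lem_3_mat_proximal_1} and~\ref{lem_3_mat_proximal_2}: if $G$ is not $3$-proximal but some $G_{z_0,j}$ (still containing $C$) is, then $\nu_3(G_{z_0,j})$, plus the relevant shift from Lemma~\ref{lem_mat1} or~\ref{lem_mat2}, is \emph{strictly} smaller than $\nu_3(G)$. This strict drop of at least one unit in $\nu_3$ absorbs the extra $+1$ or $+2$ that the proximal subgraph contributes to regularity, so the inductive bound $\reg(S/I_3(G))\le 2\nu_3(G)$ closes. One structural point you should also record: when $\level(z_0)=1$ the graphs $G_{z_0,2},G_{z_0,3},G_{z_0,4}$ delete cycle vertices and become forests, so the tree case handles them outright; this is why only $G_{z_0,1}$ needs the proximality-drop lemma at level~$1$.
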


We structure the paper as follows. In Section \ref{sec_pre}, we set up the notation, provide some background, and prove some properties of $3$-induced matching numbers of graphs. In Section \ref{sec_splittings}, we establish certain Betti splittings involving $3$-path ideals of graphs having at most one cycle. In Section \ref{sec_projective_dimension}, we compute the projective dimension and regularity of $3$-path ideals of graphs having at most one cycle. We then deduce Theorem \ref{thm_reg}.

\section{Preliminaries}\label{sec_pre}

Throughout this section, we let $S = \k[x_1,\ldots, x_n]$ be a standard graded polynomial ring over an arbitrary field $\k$. 

\subsection{Projective dimension and regularity} Let $M$ be a finitely generated graded $S$-module. The $(i,j)$-graded Betti number of $M$ is defined by 
$$\beta_{i,j}(M) = \dim_\k \Tor_i^S (\k,M)_j.$$
The $i$th Betti number of $M$ is $\beta_i(M) = \dim_\k \Tor_i^S(\k,M)$. We define the projective dimension and regularity of $M$ as follows.
\begin{align*}
    \pd_S (M) &= \sup \{i \mid \beta_i(M) \neq 0\},\\
    \reg_S(M) &= \sup \{ j -i \mid \beta_{i,j} (M) \neq 0\}.
\end{align*}

For simplicity of notation, we often write $\pd (M)$ and $\reg (M)$ instead of $\pd_S(M)$ and $\reg_S(M)$. We have the following well known facts.
\begin{lem}\label{lem_mul_x} Let $x$ be a variable and $I$ a nonzero homogeneous ideal of $S$. Then 
\begin{enumerate}
    \item $\pd (I) = \pd (S/I) -1$,
    \item $\reg (I) = \reg (S/I) +1$,
    \item $\pd (xI) = \pd (I)$,
    \item $\reg (xI) = \reg (I) + 1$.
\end{enumerate}    
\end{lem}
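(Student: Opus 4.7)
The plan is to prove (1) and (2) from the short exact sequence
$0 \to I \to S \to S/I \to 0$
together with the Tor long exact sequence, and to prove (3) and (4) by identifying $xI$ with a degree shift of $I$.

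For the first two statements, I would apply $\Tor^S_\bullet(\k,-)$ to the short exact sequence above. Since $S$ is a free $S$-module, $\Tor^S_i(\k,S) = 0$ for all $i \geq 1$. The long exact sequence therefore breaks into isomorphisms
\[
\Tor^S_i(\k, S/I)_j \;\cong\; \Tor^S_{i-1}(\k, I)_j
\]
for every $i \geq 1$ and every degree $j$, so $\beta_{i,j}(S/I) = \beta_{i-1,j}(I)$ for $i \geq 1$. Taking the supremum over $i$ with a nonzero Betti number yields $\pd(S/I) = \pd(I) + 1$, which is (1); taking the supremum over $j - i$ of indices with nonzero Betti numbers and noting that $S/I$ contributes the extra index $i = 0$ (which only gives $j = 0$, hence $j - i = 0$, and does not affect the regularity since $I$ is nonzero and so $\reg(I) \geq 1$) yields $\reg(S/I) = \reg(I) - 1$, which is (2).

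For (3) and (4), since $S$ is a domain and $x$ is a nonzerodivisor on $I$, multiplication by $x$ gives an injective homomorphism of graded $S$-modules $I(-1) \to S$ whose image is $xI$. Hence $xI \cong I(-1)$ as graded $S$-modules, so
\[
\beta_{i,j}(xI) = \beta_{i,j-1}(I)
\]
for all $i, j$. The projective dimension depends only on the homological index $i$, so $\pd(xI) = \pd(I)$, giving (3); the regularity is the supremum of $j - i$, which increases by exactly one under the shift, giving (4).

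There is no real obstacle here, as all four statements are well-known and follow formally from the Tor long exact sequence and the identification $xI \cong I(-1)$. The only mild care required is in handling the $i = 0$ term when extracting (2), which is harmless because $I \neq 0$ forces $\reg(I) \geq 1$.
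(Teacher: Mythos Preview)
Your argument is correct and is the standard way to establish these facts. The paper itself gives no proof of this lemma, simply stating ``We have the following well known facts,'' so there is nothing further to compare.
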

%\begin{proof}
 %   The first two properties follow directly from the definition of projective dimension and regularity.
%\end{proof}

\subsection{Betti splittings} Betti splittings was introduced by Francisco, Ha, and Van Tuyl \cite{FHV} for monomial ideals. It has regained interest recently in some work \cite{CF, CFL}. We recall the definition and the following results about Betti splittings from \cite{NV2}.

\begin{defn}
    Let $P, I, J$ be proper nonzero homogeneous ideals of $S$ such that $P = I + J$. The decomposition $P = I +J$ is called a Betti splitting if for all $i \ge 0$ we have $\beta_i(P) = \beta_i(I) + \beta_i(J) + \beta_{i-1}(I \cap J)$.
\end{defn}

\begin{lem}\label{lem_pd_reg_split} Assume that $P = I + J$ is a Betti splitting of ideals of $S$. Then 
\begin{align*}
    \pd_S(P) &= \max \{ \pd_S(I), \pd_S(J), \pd_S(I \cap J) + 1\},\\
    \reg_S(P) &= \max\{ \reg_S(I), \reg_S(J), \reg_S(I \cap J) - 1\}.    
\end{align*}
\end{lem}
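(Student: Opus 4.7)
The plan is to derive both identities from the Mayer--Vietoris style short exact sequence of graded $S$-modules
$$0 \to I \cap J \xrightarrow{f \mapsto (f, -f)} I \oplus J \xrightarrow{(g,h) \mapsto g+h} P \to 0.$$
Tensoring with $\k$ over $S$ yields, for each internal degree $j$, the long exact sequence
$$\cdots \to \Tor_i^S(\k, I \cap J)_j \xrightarrow{\alpha_{i,j}} \Tor_i^S(\k, I)_j \oplus \Tor_i^S(\k, J)_j \to \Tor_i^S(\k, P)_j \to \Tor_{i-1}^S(\k, I \cap J)_j \to \cdots,$$
from which a dimension count immediately gives the inequality $\beta_{i,j}(P) \le \beta_{i,j}(I) + \beta_{i,j}(J) + \beta_{i-1,j}(I \cap J)$ for every $i, j$, and hence the analogous ungraded inequality after summing over $j$.

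Summing over $j$ and comparing with the Betti splitting hypothesis, the ungraded inequality must be an equality for every $i$. A standard rank-count along the long exact sequence shows that this forces the image of every connecting map $\alpha_i = \bigoplus_j \alpha_{i,j}$ to vanish. Because the maps respect the grading, the vanishing is componentwise in $j$, so the long exact sequence decomposes into short exact sequences
$$0 \to \Tor_i^S(\k, I)_j \oplus \Tor_i^S(\k, J)_j \to \Tor_i^S(\k, P)_j \to \Tor_{i-1}^S(\k, I \cap J)_j \to 0$$
for all $i, j$. In particular, $\beta_{i,j}(P)\ne 0$ if and only if at least one of $\beta_{i,j}(I)$, $\beta_{i,j}(J)$, $\beta_{i-1,j}(I \cap J)$ is nonzero.

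From this equivalence both formulas follow by taking suprema. For the projective dimension, $\pd(P) = \sup\{i : \beta_i(P) \ne 0\}$ is the maximum of $\pd(I)$, $\pd(J)$, and $\pd(I \cap J) + 1$, the shift of $1$ arising from the homological index drop in the $\Tor_{i-1}(\k, I \cap J)$ term. For the regularity, $\reg(P) = \sup\{j - i : \beta_{i,j}(P) \ne 0\}$ decomposes analogously into the maximum of $\reg(I)$, $\reg(J)$, and $\reg(I \cap J) - 1$, where the $-1$ again records the homological shift $i \mapsto i-1$. The one potentially delicate point is passing from the ungraded splitting hypothesis to the graded statement required for regularity, but this is painless: the inequality $\beta_{i,j}(P) \le \beta_{i,j}(I) + \beta_{i,j}(J) + \beta_{i-1,j}(I \cap J)$ already holds graded-piece-by-graded-piece, so equality on total Betti numbers forces equality in each bidegree.
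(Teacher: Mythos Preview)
Your argument is correct and is the standard derivation of these identities from the Mayer--Vietoris short exact sequence; the paper itself gives no proof beyond citing \cite[Corollary 2.2]{FHV} and \cite[Lemma 3.7]{NV2}, whose content is essentially the computation you carry out. One small terminological slip: the maps you denote $\alpha_{i,j}$ are the maps on $\Tor$ induced by the inclusion $I\cap J \hookrightarrow I\oplus J$, not the connecting homomorphisms of the long exact sequence, but this does not affect the argument.
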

\begin{proof}
    See \cite[Corollary 2.2]{FHV} or \cite[Lemma 3.7]{NV2}.
\end{proof}

\begin{defn} Let $\varphi: M \to N$ be a morphism of finitely generated graded $S$-modules. We say that $\varphi$ is $\Tor$-vanishing if for all $i \ge 0$, we have $\Tor_{i}^S(\k,\varphi) = 0$.    
\end{defn}

\begin{lem}\label{lem_splitting_criterion_1} Let $I, J$ be nonzero homogeneous ideals of $S$ and $P = I+J$. The decomposition $P = I +J$ is a Betti splitting if and only if the inclusion maps $I \cap J \to I$ and $I\cap J \to J$ are $\Tor$-vanishing.    
\end{lem}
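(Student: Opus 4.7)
The plan is to apply the functor $\Tor^S_\bullet(\k,-)$ to the Mayer--Vietoris short exact sequence
\[
0 \to I \cap J \xrightarrow{\iota} I \oplus J \xrightarrow{\sigma} P \to 0,
\]
where $\iota(f) = (f,-f)$ and $\sigma(g,h) = g+h$. Since $\Tor$ commutes with finite direct sums, the resulting long exact sequence takes the form
\[
\cdots \to \Tor_i^S(\k, I \cap J) \xrightarrow{\alpha_i} \Tor_i^S(\k,I) \oplus \Tor_i^S(\k,J) \to \Tor_i^S(\k,P) \to \Tor_{i-1}^S(\k,I \cap J) \to \cdots,
\]
and by naturality $\alpha_i$ equals $\bigl(\Tor_i^S(\k,\iota_1),\,-\Tor_i^S(\k,\iota_2)\bigr)$, where $\iota_1:I \cap J \hookrightarrow I$ and $\iota_2:I \cap J \hookrightarrow J$ are the two inclusion maps. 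In particular, $\alpha_i = 0$ if and only if both inclusion maps are $\Tor$-vanishing in homological degree $i$.

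With this identification in hand, I would translate the Betti splitting identity $\beta_i(P) = \beta_i(I) + \beta_i(J) + \beta_{i-1}(I \cap J)$ into rank information on the long exact sequence. A standard rank count (splitting the sequence into short exact pieces and taking $\k$-dimensions) gives
\[
\beta_i(P) = \beta_i(I) + \beta_i(J) + \beta_{i-1}(I \cap J) - \mathrm{rk}(\alpha_i) - \mathrm{rk}(\alpha_{i-1})
\]
for every $i \ge 0$, with the convention $\alpha_{-1}=0$. Consequently the Betti splitting condition is equivalent to $\mathrm{rk}(\alpha_i) + \mathrm{rk}(\alpha_{i-1}) = 0$ for all $i \ge 0$, and since ranks are non-negative this is equivalent to $\alpha_i = 0$ for every $i$. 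Combined with the previous paragraph, this yields both directions of the claimed equivalence: in the ``if'' direction, vanishing of the $\alpha_i$ breaks the long exact sequence into short exact sequences from which the Betti splitting identity can be read off; in the ``only if'' direction, the rank count forces each $\alpha_i$ to vanish.

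I expect the only subtle point to be the naturality identification of $\alpha_i$ with the pair of maps induced by $\iota_1$ and $\iota_2$, which amounts to the additivity of $\Tor_i^S(\k,-)$ on morphisms into direct sums. Once this identification and the Mayer--Vietoris sequence are in place, the rest reduces to a clean linear-algebra bookkeeping argument on the long exact sequence.
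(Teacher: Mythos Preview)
Your argument is correct: the Mayer--Vietoris short exact sequence, the identification of $\alpha_i$ with the pair of induced maps on $\Tor$, and the rank count
\[
\beta_i(P) = \beta_i(I) + \beta_i(J) + \beta_{i-1}(I\cap J) - \operatorname{rk}(\alpha_i) - \operatorname{rk}(\alpha_{i-1})
\]
all go through exactly as you describe, and the equivalence follows. The paper itself does not prove this lemma but simply cites \cite[Lemma~3.5]{NV2}; your write-up is the standard long-exact-sequence argument that underlies that reference, so there is no substantive difference in approach to report.
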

\begin{proof}
    See \cite[Lemma 3.5]{NV2}.
\end{proof}
For a monomial ideal $I$ of $S$, we denote by $\partial^*(I)$ the ideal generated by elements of the form $f/x_i$, where $f$ is a minimal monomial generator of $I$ and $x_i$ is a variable dividing $f$. We have 

\begin{lem}\label{lem_splitting_criterion_2} Let $I\subseteq J$ be monomial ideals of $S$. Assume that $\partial^*(I) \subseteq J$. Then the inclusion map $I \to J$ is $\Tor$-vanishing.    
\end{lem}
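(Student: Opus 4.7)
The plan is to reduce Tor-vanishing to a multigraded combinatorial statement via Hochster's formula. For a monomial ideal $K \subseteq S$ and a multidegree $\b \in \NN^n$, Hochster's formula yields
$\beta_{i,\b}(K) = \dim_\k \tilde H_{i-1}(\Delta^K_\b; \k),$
where $\Delta^K_\b = \{\sigma \subseteq \supp(\b) : \x^\b/\x^\sigma \in K\}$ is the upper Koszul simplicial complex at $\b$. The inclusion $\iota: I \hookrightarrow J$ is multigraded, and at each multidegree $\b$ the induced map on $\Tor_i^S(\k, -)$ is the map on reduced simplicial homology induced by the inclusion $\Delta^I_\b \hookrightarrow \Delta^J_\b$. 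So it suffices to show, for every $\b$ and every $i \geq 0$, that this simplicial inclusion induces the zero map on $\tilde H_{i-1}$.

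The key combinatorial consequence of the hypothesis $\partial^*(I) \subseteq J$ is the following enlargement property: for every $\sigma \in \Delta^I_\b$ and every $v \in \supp(\b) \setminus \sigma$, one has $\sigma \cup \{v\} \in \Delta^J_\b$. To verify this, I would write $\x^\b/\x^\sigma = m \cdot g$ with $g$ a minimal generator of $I$ and $m$ a monomial; dividing by $x_v$ yields either $(m/x_v) \cdot g \in I \subseteq J$ (when $x_v \mid m$) or $m \cdot (g/x_v) \in \partial^*(I) \subseteq J$ (when $x_v \mid g$), so in either case $\x^\b/(\x^\sigma x_v) \in J$.

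Finally, I would use this enlargement property to produce a contractible subcomplex of $\Delta^J_\b$ containing $\Delta^I_\b$, making the inclusion null-homotopic and hence zero on reduced homology in all degrees. If some $v \in \supp(\b)$ is not a vertex of $\Delta^I_\b$ (i.e., $\x^\b/x_v \notin I$), then the enlargement property makes $v$ a cone apex for $\Delta^I_\b$ in $\Delta^J_\b$, so the cone $v * \Delta^I_\b \subseteq \Delta^J_\b$ is the desired contractible subcomplex. The main obstacle will be the degenerate case in which every singleton of $\supp(\b)$ already lies in $\Delta^I_\b$; here no obvious cone apex is available, but the enlargement property forces $\Delta^J_\b$ to contain the full $1$-skeleton on $\supp(\b)$ together with every one-step dilation of a face of $\Delta^I_\b$. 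I expect to handle this degenerate regime either by a discrete Morse matching on $\Delta^J_\b$ or by an induction on the number of minimal generators of $I$ dividing $\x^\b$, to conclude that the induced map on reduced homology still vanishes.
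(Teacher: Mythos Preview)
Your approach via upper Koszul simplicial complexes is sound and self-contained, whereas the paper simply cites \cite[Lemma~4.2, Proposition~4.4]{NV2} without reproducing an argument. The reduction of $\Tor$-vanishing to the vanishing of the maps $\tilde H_{i-1}(\Delta^I_\b)\to\tilde H_{i-1}(\Delta^J_\b)$ is correct (this is the functoriality of the Koszul computation of $\Tor$ in each multidegree), and your enlargement property is established correctly.

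The only defect is that you leave the ``degenerate case'' open, gesturing at discrete Morse theory or an induction. In fact no additional work is needed: your cone argument already covers every case, because the apex $v$ need not be absent from $\Delta^I_\b$. Pick any $v\in\supp(\b)$ (nonempty once $\x^\b\in I$, since $I$ is proper). For $\sigma\in\Delta^I_\b$ with $v\notin\sigma$, your enlargement property gives $\sigma\cup\{v\}\in\Delta^J_\b$; if $v\in\sigma$ then $\sigma\cup\{v\}=\sigma\in\Delta^I_\b\subseteq\Delta^J_\b$ trivially. Hence the abstract simplicial complex
\[
L \;=\; \Delta^I_\b \,\cup\, \{\sigma\cup\{v\} : \sigma\in\Delta^I_\b\}
\]
satisfies $\Delta^I_\b\subseteq L\subseteq\Delta^J_\b$. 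Every face $\tau\in L$ has $\tau\cup\{v\}\in L$, so $L$ is a cone with apex $v$ and is acyclic in all degrees (including $\tilde H_{-1}$). The inclusion $\Delta^I_\b\hookrightarrow\Delta^J_\b$ therefore factors through an acyclic complex and induces zero on reduced homology. Drop the case distinction and your proof is complete.
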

\begin{proof}
    Applying \cite[Lemma 4.2, Proposition 4.4]{NV2}, we deduce the conclusion.
\end{proof}

\subsection{Graphs and their path ideals}\label{subsection_graphs} 

Let $G$ denote a finite simple graph over the vertex set $V(G) = \{x_1,\ldots,x_n\}$ and the edge set $E(G)$. For a vertex $x\in V(G)$, let the neighborhood of $x$ be the subset $N_G(x)=\{y\in V(G) \mid \{x,y\}\in E(G)\}$. The closed neighborhood of $x$ is $N_G[x]=N_G(x)\cup\{x\}$. A leaf is a vertex having a unique neighbor.

A simple graph $H$ is a subgraph of $G$ if $V(H) \subseteq V(G)$ and $E(H) \subseteq E(G)$. $H$ is an induced subgraph of $G$ if it is a subgraph of $G$ and $E(H) = E(G) \cap V(H) \times V(H)$.

For a subset $U \subset V(G)$, we denote by $N_G(U) = \cup \left ( N_G(x) \mid x \in U \right )$ and $N_G[U] = \cup \left ( N_G[x] \mid x \in U \right )$. When it is clear from the context, we drop the subscript $G$ from the notation $N_G$. The induced subgraph of $G$ on $U$, denoted by $G[U]$, is the graph such that $V(G[U]) = U$ and $E(G[U]) = E(G) \cap U \times U$. The notation $G\setminus U$ denotes the induced subgraph of $G$ on $V(G) \setminus U$.

A cycle $C$ of length $m$ is the graph on $V(C) = \{x_1,\ldots,x_m\}$ whose edges are $\{x_1,x_2\}, \ldots, \{x_{m-1}x_m\} , \{x_m,x_1\}$. 

A graph $G$ is called unicyclic if $G$ has a unique cycle.

A tree is a connected graph without any cycle.

Let $t \ge 2$ be a natural number. A $t$-path of $G$ is a sequence of distinct vertices $x_1, \ldots, x_t$ of $G$ such that $\{x_1,x_2\}$,$\ldots, \{x_{t-1},x_t\}$ are edges of $G$. A $t$-path matching in a graph $G$ is a subgraph consisting of pairwise disjoint $t$-paths. If the subgraph is induced, then $t$-path matching is said to be a $t$-path induced matching of $G$. The $t$-path induced matching number of $G$, denoted by $\nu_t(G)$, is the largest size of a $t$-path induced matching of $G$. 

From the definition, we have
\begin{lem}\label{lem_ind_mat_properties} Let $G$ be a simple graph. We have
\begin{enumerate}
    \item Assume that $H$ is an induced subgraph of $G$. Then $\nu_t(H) \le \nu_t(G)$.
    \item Assume that $G = G_1 \cup G_2$ where $G_1$ and $G_2$ are disjoint. Then $\nu_t(G) = \nu_t(G_1) + \nu_t(G_2)$.
\end{enumerate}    
\end{lem}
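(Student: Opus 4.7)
The plan is to verify both parts directly from the definition of a $t$-path induced matching.

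For part (1), the key observation is that a $t$-path induced matching $\mathcal{M}$ of $H$ remains a $t$-path induced matching when viewed inside $G$. First, every $t$-path in $H$ is a $t$-path in $G$ because $E(H) \subseteq E(G)$, so the disjoint paths making up $\mathcal{M}$ are still $t$-paths of $G$. Second, the induced condition transfers: if $U \subseteq V(H)$ is the union of the vertex sets of the paths in $\mathcal{M}$, then $G[U] = H[U]$ since $H$ is an induced subgraph of $G$, so the induced subgraph on $U$ in $G$ is still the disjoint union of the chosen $t$-paths. Hence $\nu_t(H) \le \nu_t(G)$.

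For part (2), I would prove both inequalities. For the lower bound $\nu_t(G) \ge \nu_t(G_1) + \nu_t(G_2)$, pick $t$-path induced matchings $\mathcal{M}_1$ in $G_1$ and $\mathcal{M}_2$ in $G_2$ of maximum sizes; their union $\mathcal{M}_1 \cup \mathcal{M}_2$ is a $t$-path induced matching of $G$ because $V(G_1) \cap V(G_2) = \emptyset$ and no edges of $G$ connect $V(G_1)$ to $V(G_2)$, so the induced subgraph on the union of vertex sets is just the disjoint union of the two induced matchings. For the upper bound, given a maximum $t$-path induced matching $\mathcal{M}$ of $G$, observe that since $G_1$ and $G_2$ are disjoint and share no edges, every $t$-path of $G$ lies entirely in $G_1$ or entirely in $G_2$. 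Partition $\mathcal{M} = \mathcal{M}_1 \sqcup \mathcal{M}_2$ according to which component each path lies in; each $\mathcal{M}_i$ is a $t$-path induced matching of $G_i$, giving $\nu_t(G) = |\mathcal{M}| = |\mathcal{M}_1| + |\mathcal{M}_2| \le \nu_t(G_1) + \nu_t(G_2)$.

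Neither part presents a real obstacle; the statement is essentially a bookkeeping check ensuring that the notion of induced matching behaves well under taking induced subgraphs and disconnected unions. The only thing to be careful about is invoking the induced condition correctly in part (1), i.e., noting that $G[U] = H[U]$ whenever $U \subseteq V(H)$ and $H$ is induced in $G$, so that the $t$-paths in $\mathcal{M}$ remain pairwise non-adjacent in $G$.
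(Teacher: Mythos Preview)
Your proof is correct and matches the paper's approach: the paper simply states ``From the definition, we have'' before the lemma and gives no further argument, so your direct verification from the definition of a $t$-path induced matching is exactly what the authors have in mind. Nothing is missing.
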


The $t$-path ideal of $G$ is defined to be
$$I_t(G)= \left ( x_{i_1} \cdots x_{i_t} \mid x_{i_1}, \ldots,x_{i_t} \text{ is a } t\text{-path of } G \right ) \subseteq S.$$

For simplicity of notation, we often write $x_i \in G$ instead of $x_i \in V(G)$. By abuse of notation, we also call a monomial $x_{i_1} \cdots x_{i_t} \in I_t(G)$ a $t$-path of $G$.

\begin{lem}\label{lem_induced_sub} Let $H$ be an induced subgraph of $G$. Then $\reg (I_t(H)) \le \reg (I_t(G))$ and $\pd (I_t(H)) \le \pd (I_t(G))$. In particular, $\reg (I_t(G)) \ge (t-1) \nu_t(G) + 1$.  
\end{lem}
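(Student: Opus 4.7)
The plan is to compare the Betti numbers of $I_t(H)$ and $I_t(G)$ directly via Hochster's formula for squarefree monomial ideals, and then derive the lower bound on $\reg(I_t(G))$ by taking $H$ to be the disjoint union of the paths in a maximum induced $t$-path matching of $G$.

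For the first two inequalities, recall Hochster's formula: for a squarefree monomial ideal $I\subseteq\k[V]$ with Stanley--Reisner complex $\Delta_I=\{F\subseteq V:\prod_{v\in F}v\notin I\}$,
$$\beta_{i,j}(I) \;=\; \sum_{\substack{W\subseteq V\\ |W|=j}} \dim_\k \h_{j-i-2}\bigl((\Delta_I)|_W;\,\k\bigr).$$
The key observation is that for every $W\subseteq V(H)$, $(\Delta_{I_t(G)})|_W=(\Delta_{I_t(H)})|_W$. Indeed, $F\subseteq W$ fails to be a face of either complex exactly when it contains the vertex set of some $t$-path; and since $H$ is an induced subgraph of $G$, every $t$-path of $G$ whose vertices all lie in $V(H)$ is already a $t$-path of $H$, and vice versa. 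Applying Hochster's formula to $I_t(H)$ over $\k[V(H)]$ and to $I_t(G)$ over $\k[V(G)]$, the sum computing $\beta_{i,j}(I_t(H))$ thus indexes a sub-family of the one computing $\beta_{i,j}(I_t(G))$ with matching non-negative summands. Hence $\beta_{i,j}(I_t(H))\le\beta_{i,j}(I_t(G))$ for all $i,j$, which immediately yields both $\pd(I_t(H))\le\pd(I_t(G))$ and $\reg(I_t(H))\le\reg(I_t(G))$.

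For the ``in particular'' statement, let $\nu=\nu_t(G)$ and pick an induced $t$-path matching $P_1,\dots,P_\nu$ realizing it. Set $H=G[V(P_1)\cup\cdots\cup V(P_\nu)]$; by the induced-matching hypothesis, $H$ is precisely the disjoint union of the $P_i$. Each $I_t(P_i)$ is the principal ideal generated by a single squarefree monomial of degree $t$, so $\reg(I_t(P_i))=t$. Since the $P_i$ use pairwise disjoint sets of variables, the standard additivity of regularity for sums of ideals in disjoint variables (as recorded in \cite{NV2} and used throughout the paper) gives $\reg(I_t(H))=(t-1)\nu+1$. Combining with the first part, $\reg(I_t(G))\ge\reg(I_t(H))=(t-1)\nu_t(G)+1$.

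The only step demanding any thought is the identification $(\Delta_{I_t(G)})|_W=(\Delta_{I_t(H)})|_W$ for $W\subseteq V(H)$; this is exactly where the induced-subgraph hypothesis is essential, and it would fail for a non-induced subgraph. Everything else is bookkeeping or an appeal to standard machinery.
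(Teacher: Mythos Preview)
Your proof is correct and rests on the same observation as the paper's: because $H$ is induced, $I_t(H)$ is exactly the restriction of $I_t(G)$ to the variables $V(H)$, so the graded Betti numbers of $I_t(H)$ are dominated termwise by those of $I_t(G)$. The paper obtains this in one line by citing \cite[Corollary~2.5]{OHH}, whereas you unpack it explicitly via Hochster's formula; your route is more self-contained and makes transparent exactly where the induced hypothesis enters, at the cost of a few more lines. For the lower bound, the paper phrases $I_t(H)$ as a complete intersection of $\nu_t(G)$ forms of degree $t$ (giving $\reg(I_t(H))=(t-1)\nu_t(G)+1$ immediately), while you invoke additivity of regularity for ideals in disjoint variables; these are equivalent computations of the same quantity.
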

\begin{proof}
    Since $H$ is an induced subgraph of $G$, $I_t(H)$ is the restriction of $I_t(G)$ on $V(H)$. The conclusion follows from \cite[Corollary 2.5]{OHH}.

    Now let $H$ be a maximum $t$-induced matching of $G$. Then $I_t(H)$ is the complete intersection of $\nu_t(G)$ forms of degree $t$. Hence, $\reg (I_t(H)) = (t-1) \nu_t(G) + 1$. The conclusion follows.
\end{proof}

\subsection{3-path induced matching numbers}\label{sub_ind_mat} In this subsection, we give a recursive formula for computing the $3$-path induced matching numbers of graphs having at most one cycle. Let $G$ be a connected graph having at most one cycle of length $m$ on the vertices $v_1, \ldots, v_m$. When $G$ is a tree, we set $m = 1$. Attached to each $v_i$, $i = 1, \ldots,m$, we have a tree $T_i$, where we may consider $T_i$ as a rooted tree with root $v_i$. Note that $E(T_i)$ might be empty. For each vertex $y \in T_i$, the level of $y$, denoted by $\level(y)$, is the length of the unique path from $v_i$ to $y$. Fix an $i$ such that $E(T_i)$ is non-empty. Let $z_0 \in T_i$ be a leaf of $G$ of the highest level among all leaves in $T_i$. Let $y_0$ be the unique neighbor of $z_0$. 

First, assume that $\level(z_0) \ge 2$. Then $y_0 \notin \{v_1, \ldots, v_m\}$ has a unique neighbor that is not a leaf. We assume that $N(y_0) = \{z_0, z_1, \ldots, z_s\} \cup \{x_0\}$, where $z_1, \ldots, z_s$ are leaves and $x_0$ is not a leaf of $G$. We define the following graphs
\begin{equation}\label{eq_graphs_level_2}
    G_{z_0,1} = G \setminus \{z_0,\ldots,z_s\}, G_{z_0,2} = G \setminus N[y_0], \text{ and } G_{z_0,3} = G \setminus N[\{y_0,x_0\}]. 
\end{equation}

\begin{lem}\label{lem_mat1} With the notations above, we have 
$$\nu_3(G) = \begin{cases} \max \{ \nu_3(G_{z_0,1}), 1 + \nu_3(G_{z_0,2}) \} & \text{ if } s \ge 1,\\
\max \{ \nu_3(G_{z_0,1}), 1 + \nu_3(G_{z_0,3}) \} & \text{ if } s = 0.    
\end{cases}$$
\end{lem}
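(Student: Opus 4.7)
The plan is to prove the equality by establishing both inequalities. For $\nu_3(G) \ge \nu_3(G_{z_0,1})$, I would apply Lemma~\ref{lem_ind_mat_properties}(1) directly, since $G_{z_0,1}$ is an induced subgraph of $G$. For $s \ge 1$, I would take a maximum $3$-path induced matching $M'$ of $G_{z_0,2} = G \setminus N[y_0]$ and adjoin the $3$-path $z_0 - y_0 - z_1$. This path is induced because $z_0, z_1$ are leaves with the same unique neighbor $y_0$ (hence non-adjacent); moreover $V(M') \cap N[y_0] = \emptyset$ while $N(\{z_0, y_0, z_1\}) \subseteq N[y_0]$, so no edges of $G$ join $M'$ to the new path. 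This yields $\nu_3(G) \ge 1 + \nu_3(G_{z_0,2})$. The argument for $s = 0$ is parallel, using the induced path $z_0 - y_0 - x_0$ together with a maximum matching of $G_{z_0,3} = G \setminus N[\{y_0,x_0\}]$.

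For the reverse inequality, I would fix a maximum $3$-path induced matching $M$ of $G$ and split on whether some $3$-path of $M$ contains $y_0$. If no $3$-path of $M$ contains $y_0$, then no $3$-path of $M$ contains any $z_i$ either: a $3$-path through the leaf $z_i$ must have $z_i$ as an endpoint whose path-neighbor is its unique graph-neighbor $y_0$, forcing $y_0 \in P$. Hence $M$ lies entirely in $G_{z_0,1}$ and $|M| \le \nu_3(G_{z_0,1})$. Otherwise, pick $P \in M$ with $y_0 \in P$; the induced-matching property forces every other $P' \in M$ to satisfy $V(P') \subseteq V(G) \setminus N[P]$. For $s \ge 1$, we have $N[P] \supseteq N[y_0]$, so the remaining $|M|-1$ paths lie in $G_{z_0,2}$ and $|M| \le 1 + \nu_3(G_{z_0,2})$.

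For $s = 0$, the key observation is that $N(y_0) = \{z_0, x_0\}$ with $z_0$ a leaf; enumerating the $3$-paths through $y_0$ leaves only $z_0 - y_0 - x_0$ or $y_0 - x_0 - w$ (the alternative $y_0 - z_0 - \cdot$ is impossible as $z_0$ is a leaf). In both subcases $x_0 \in P$, so $N[P] \supseteq N[\{y_0, x_0\}]$ and the remaining paths lie in $G_{z_0,3}$, yielding $|M| \le 1 + \nu_3(G_{z_0,3})$. I expect the main obstacle to be the bookkeeping required to check that the constructed and restricted matchings remain \emph{induced} rather than merely disjoint; the recurring tool is that each leaf $z_i$ has unique neighbor $y_0$, which both constrains the possible $3$-paths using $z_i$ and guarantees non-adjacency between the newly adjoined paths and the inductive matchings, and it is this enumeration that forces the use of $G_{z_0,3}$ in place of $G_{z_0,2}$ when $s=0$.
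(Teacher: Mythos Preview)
Your proposal is correct and follows essentially the same approach as the paper: both directions are handled by Lemma~\ref{lem_ind_mat_properties} for the lower bound and by a case analysis on a maximum $3$-path induced matching for the upper bound. The only cosmetic difference is that the paper splits on whether some $z_i$ lies in the matching (and then observes the containing path must pass through $y_0$), whereas you split directly on whether $y_0$ lies in the matching and deduce that no $z_i$ can appear otherwise; the two splits are equivalent and lead to the same bounds, with your version spelling out the ``induced'' verification in slightly more detail.
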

\begin{proof} By Lemma \ref{lem_ind_mat_properties}, the left-hand side is at least the right-hand side. Let $H$ be an induced $3$-path matching of $G$ of maximum size. If $V(H) \cap \{z_0, \ldots,z_s\} = \emptyset$ then $H$ is an induced $3$-path matching of $G_{z_0,1}$. Thus, we may assume that $z_0 \in V(H)$. If $t = 0$ then the $3$-path must be $z_0y_0x_0$ and we have $\nu_3(G) = 1 + \nu_3(G_{z_0,3})$. If $t > 0$ then the $3$-path is either $z_0 y_0 z_j$ for some $j > 0$ or $z_0 y_0 x_0$. In either cases, $\nu_3(G) \le 1 + \nu_3(G_{z_0,2})$. The conclusion follows.    
\end{proof}

Now assume that $\level(z_0) = 1$. We may assume that $v_1$ is the unique neighbor of $z_0$. We assume that $N(v_1) = \{z_0,z_1,\ldots,z_s\} \cup \{v_2,v_m\}$. Note that $z_1, \ldots, z_s$ are leaves of level $1$. We define the following graphs 
\begin{equation} \label{eq_graphs_level_1}
\begin{split}
G_{z_0,1} &= G \setminus \{z_0,\ldots,z_s\}, G_{z_0,2} = G \setminus N[v_1],\\ 
G_{z_0,3} &= G \setminus N[\{v_1,v_2\}], G_{z_0,4} = G \setminus N[\{v_1,v_m\}].
\end{split}
\end{equation}

\begin{lem}\label{lem_mat2} With the notations above, we have 
$$\nu_3(G) = \begin{cases} \max \{ \nu_3(G_{z_0,1}), 1 + \nu_3(G_{z_0,2}) \} & \text{ if } s \ge 1,\\
\max \{ \nu_3(G_{z_0,1}), 1 + \nu_3(G_{z_0,3}),1 + \nu_3(G_{z_0,4}) \} & \text{ if } s = 0.    
\end{cases}$$
\end{lem}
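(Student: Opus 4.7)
The plan is to mirror the proof of Lemma \ref{lem_mat1}, adapted to the case where the extremal leaf $z_0$ sits directly on the cycle vertex $v_1$. The inequality $\nu_3(G) \ge$ the right-hand side is routine: each $G_{z_0,i}$ is an induced subgraph of $G$, so Lemma \ref{lem_ind_mat_properties}(1) gives $\nu_3(G) \ge \nu_3(G_{z_0,1})$. For the $+1$ terms, I would take a maximum induced $3$-path matching of $G_{z_0,2}$ (resp.\ $G_{z_0,3}$, $G_{z_0,4}$) and enlarge it by the $3$-path $z_0 v_1 z_1$ (resp.\ $z_0 v_1 v_2$, $z_0 v_1 v_m$). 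The enlargement remains induced because all neighbors of the added $3$-path lie inside the closed neighborhood that was deleted, and the added $3$-path is itself induced since $z_0$ is a leaf adjacent only to $v_1$.

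For the reverse inequality, I start with a maximum induced $3$-path matching $H$ of $G$. If $V(H) \cap \{z_0, \ldots, z_s\} = \emptyset$, then $H$ is already an induced $3$-path matching of $G_{z_0,1}$. Otherwise some $z_j \in V(H)$, and since $v_1$ is the unique neighbor of $z_j$, the $3$-path $P \in H$ containing $z_j$ has the form $z_j v_1 w$ with $w \in N(v_1) \setminus \{z_j\}$. Setting $H' = H \setminus \{P\}$, the crux is to show that $H'$ lives inside the appropriate deleted graph. When $s \ge 1$, I would show $V(H') \cap N[v_1] = \emptyset$: vertices $v_1$ and $z_0, \ldots, z_s$ cannot appear in $H'$ by the vertex-disjointness of $H$ together with the fact that $v_1$ is the unique neighbor of each $z_k$, while $v_2$ and $v_m$ cannot appear because $H$ is induced—any such appearance would produce an edge of $G[V(H)]$ crossing between two distinct paths of $H$. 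When $s = 0$, the third vertex $w$ is forced to be $v_2$ or $v_m$; in the former case I would extend the above argument to also exclude $v_3$ (via the induced condition on the edge $v_2 v_3$) and any leaves attached to $v_2$ (since their only neighbor $v_2$ is in $P$), yielding $\nu_3(G) \le 1 + \nu_3(G_{z_0,3})$; the case $w = v_m$ is symmetric.

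The main obstacle is the bookkeeping of which vertices can be excluded from $V(H')$: the matching condition alone only guarantees that $V(H')$ avoids $V(P)$, whereas we need disjointness from an entire closed neighborhood. The pivotal tool throughout is the induced condition on $H$: whenever a cycle neighbor of a vertex of $P$ lay in a different $3$-path of $H$, the induced subgraph $G[V(H)]$ would acquire a stray edge absent from $H$, contradicting inducedness. This observation is what upgrades the vertex-disjointness of a matching to the closed-neighborhood-disjointness required for $H'$ to sit inside the various $G_{z_0,i}$.
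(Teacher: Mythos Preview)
Your proposal is correct and follows exactly the paper's approach: case analysis on a maximum induced $3$-path matching $H$, branching on whether $H$ meets $\{z_0,\dots,z_s\}$, and using the induced condition to force $H\setminus\{P\}$ into the appropriate $G_{z_0,i}$; the paper's own proof merely says ``similar to Lemma~\ref{lem_mat1}'' and records the extra split $w\in\{v_2,v_m\}$ when $s=0$. One small tightening: in the $s=0$, $w=v_2$ case you must exclude \emph{all} neighbors of $v_2$ from $V(H')$, not just $v_3$ and the leaves of $T_2$---a non-leaf $u\in T_2$ adjacent to $v_2$ is ruled out by the same inducedness argument you already use for $v_3$ (via the edge $v_2u$), so just apply that uniformly rather than invoking the leaf-specific ``only neighbor'' reasoning.
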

\begin{proof} The proof is similar to that of Lemma \ref{lem_mat1} with a remark that when $t=0$ and $z_0 \in H$ where $H$ is a maximum $3$-induced matching of $G$ then either $z_0, v_1, v_2$ or $z_0,v_1,v_m \in H$. The conclusion follows.    
\end{proof}

We now compute the $3$-path induced matching numbers of paths and cycles.
\begin{lem}\label{lem_path_3_mat} Let $P_m$ be a path on the vertices $\{1,\ldots,m\}$. Then $\nu_3(P_m) = \lfloor \frac{m+1}{4} \rfloor$.    
\end{lem}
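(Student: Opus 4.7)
The plan is to reduce the problem to a purely combinatorial statement about starting indices of 3-paths. Since $P_m$ is a path, every 3-path of $P_m$ is of the form $T_i := \{i, i+1, i+2\}$ for some $1 \le i \le m-2$ (consecutive triple). So any 3-path matching is determined by a set of starting indices $i_1 < i_2 < \cdots < i_\nu$ with $i_k \in \{1, \ldots, m-2\}$.

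Next, I would translate the induced matching condition on such a collection into a gap condition on consecutive indices. Vertex-disjointness of $T_{i_k}$ and $T_{i_{k+1}}$ gives $i_{k+1} \ge i_k+3$. The induced condition requires additionally that no edge of $P_m$ joins a vertex of $T_{i_k}$ to a vertex of $T_{i_{k+1}}$; the only candidate edge is $\{i_k+2, i_k+3\}$, so we must have $i_{k+1} \ne i_k+3$. Combining, the condition is $i_{k+1} \ge i_k + 4$ for all $k$.

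The upper bound follows immediately: if $\nu$ such starting indices exist, then
$$i_\nu \ge i_1 + 4(\nu-1) \ge 4\nu - 3,$$
while $i_\nu \le m-2$, hence $4\nu \le m+1$, i.e., $\nu \le \lfloor (m+1)/4 \rfloor$. For the matching lower bound, I would exhibit the explicit choice $i_k = 4k-3$ for $k = 1, \ldots, \lfloor (m+1)/4 \rfloor$; this satisfies the gap condition by construction, and the final triple ends at $i_\nu + 2 = 4\nu - 1 \le m$ since $\nu \le (m+1)/4$. Together these give $\nu_3(P_m) = \lfloor (m+1)/4 \rfloor$.

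There is no real obstacle here; the argument is a one-line combinatorial count once the induced-matching condition is correctly rephrased as the gap inequality $i_{k+1} \ge i_k + 4$. The only subtlety worth stating carefully is the distinction between $i_{k+1} \ge i_k+3$ (disjointness) and $i_{k+1} \ge i_k+4$ (induced), as the former would give a different and wrong count.
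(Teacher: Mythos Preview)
Your argument is correct and complete. You directly characterize the $3$-path induced matchings of $P_m$ as sequences of starting indices with gaps at least $4$, then count. The paper instead proceeds by induction on $m$, invoking the recursive formula of Lemma~\ref{lem_mat1} to obtain
\[
\nu_3(P_m) = \max\{\nu_3(P_{m-1}),\,1+\nu_3(P_{m-4})\},
\]
and then closes the induction with the arithmetic $\max\{\lfloor m/4\rfloor,\,1+\lfloor(m-3)/4\rfloor\}=\lfloor(m+1)/4\rfloor$. Your route is more self-contained and avoids any appeal to the structural lemmas developed earlier; the paper's route, on the other hand, is chosen precisely to illustrate Lemma~\ref{lem_mat1} in action, since that recursion is the engine behind the later regularity computations. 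Either proof is fine; the only point worth noting is that your explicit gap argument does not generalize as cleanly to the unicyclic setting, whereas the recursive approach is uniform across all the graphs treated in the paper.
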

\begin{proof} We prove by induction on $m$. The base case $m \le 3$ is obvious. Now, assume that $m \ge 4$. By Lemma \ref{lem_mat1}, we have 
$$\nu_3(P_m) = \max \left \{ \nu_3(P_{m-1}),1 + \nu_3(P_{m-4}) \right \}.$$
By induction on $m$, the conclusion follows.    
\end{proof}

\begin{lem}\label{lem_cycle_3_mat} Let $C_m$ be a cycle of length $m \ge 4$. Then $\nu_3(C_m) = \lfloor \frac{m}{4} \rfloor$.    
\end{lem}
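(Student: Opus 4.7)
The plan is to prove both bounds separately. For the lower bound $\nu_3(C_m) \ge \lfloor m/4 \rfloor$, I would construct an explicit induced $3$-path matching. Set $k = \lfloor m/4 \rfloor$ and take the $3$-paths $v_{4i+1} v_{4i+2} v_{4i+3}$ for $i = 0, 1, \ldots, k-1$; these are pairwise vertex-disjoint because their indices lie in $\{1, 2, \ldots, 4k-1\} \subseteq \{1, \ldots, m\}$. One then verifies that no spurious edge appears in the induced subgraph on the union of these triples: within each triple the endpoints $v_{4i+1}$ and $v_{4i+3}$ are at cyclic distance $2$, consecutive triples are separated by cyclic distance $2$ as well, and the wraparound from $v_{4k-1}$ back to $v_1$ uses at least two edges since $m \ge 4k$.

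For the upper bound, let $H$ be any induced $3$-path matching of $C_m$. The key observation is that $H$ must leave at least one vertex of $C_m$ uncovered. Otherwise, the $3$-paths of $H$ would partition $V(C_m)$ into consecutive triples of vertices on the cycle, and walking once around $C_m$ would inevitably force two adjacent vertices $v_j, v_{j+1}$ to belong to distinct $3$-paths of $H$; the edge $\{v_j, v_{j+1}\}$ would then be an extra edge in the induced subgraph on $V(H)$, contradicting the hypothesis that $H$ is induced. Picking any uncovered vertex $v$, the matching $H$ lies inside $C_m \setminus \{v\}$, which is the path $P_{m-1}$, so Lemma \ref{lem_path_3_mat} yields $|H| \le \nu_3(P_{m-1}) = \lfloor m/4 \rfloor$.

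The only mild subtlety is the cyclic wraparound in the lower-bound construction, which is why the assumption $m \ge 4$ is needed; combining the two bounds then gives the desired equality.
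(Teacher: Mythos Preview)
Your proof is correct and complete. The paper takes a slightly different route: by the rotational symmetry of $C_m$ it assumes that $v_1v_2v_3$ lies in a maximum $3$-path induced matching, observes that the remaining $3$-paths must then avoid $v_m$ and $v_4$ and hence live inside the path $P_{m-5}$ on $\{v_5,\ldots,v_{m-1}\}$, and concludes $\nu_3(C_m)=1+\nu_3(P_{m-5})=1+\lfloor(m-4)/4\rfloor=\lfloor m/4\rfloor$ via Lemma~\ref{lem_path_3_mat}. Both arguments reduce to the path case, but yours splits the work into an explicit construction for the lower bound and a reduction to $P_{m-1}$ (through an uncovered vertex) for the upper bound, whereas the paper extracts both inequalities from a single reduction to $P_{m-5}$. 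Your approach is a bit longer but entirely elementary; the paper's is terser but leaves the reverse inequality (that any induced $3$-path matching of $P_{m-5}$ extends by adjoining $v_1v_2v_3$) implicit.
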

\begin{proof} We may assume that $v_1,v_2,v_3$ is in a maximum $3$-induced matching of $C_m$. Then $\nu_3(C_m) = 1 + \nu_3(P_{m-5})$. The conclusion follows from Lemma \ref{lem_path_3_mat}.    
\end{proof}

\subsection{Proximal graphs}\label{sub_proximal} Let $G$ be a connected simple graph with a unique cycle $C$ on the vertices $v_1, \ldots, v_m$ where $m \ge 3$. Let $\partial(C) = N_G(C) \setminus C$ be the boundary of $C$ in $G$ and $\Gamma_G(C)$ be the induced subgraph of $G$ on $V(G) \setminus \partial (C)$. 

\begin{defn} We say that $G$ is $t$-proximal if $\nu_t(G) = \nu_t( \Gamma_G(C) ).$    
\end{defn}

We now prove some properties regarding $3$-induced matching numbers of $3$-proximal graphs. We keep the notations as in subsection \ref{sub_ind_mat}.

\begin{lem}\label{lem_3_mat_proximal_1} Let $z_0$ be a leaf of the highest level of $G$ with the unique neighbor $y_0$. Assume that $N(y_0) = \{z_0,z_1,\ldots,z_s\} \cup \{x_0\}$ and $\level(z_0) \ge 2$. Assume that $G$ is not $3$-proximal.
\begin{enumerate}
    \item If $G_{z_0,1}$ is $3$-proximal then $\nu_3(G_{z_0,1}) < \nu_3(G)$.
    \item If $s \ge 1$ and $G_{z_0,2}$ is $3$-proximal then $\nu_3(G_{z_0,2}) + 1 < \nu_3(G)$. 
    \item If $s = 0$ and $G_{z_0,3}$ is $3$-proximal then $\nu_3(G_{z_0,3}) + 1 < \nu_3(G)$. 
\end{enumerate}
\end{lem}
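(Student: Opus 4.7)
The plan is to exploit the strict inequality $\nu_3(\Gamma_G(C)) < \nu_3(G)$, which is equivalent to $G$ not being $3$-proximal (the opposite inequality $\nu_3(\Gamma_G(C)) \leq \nu_3(G)$ always holds by Lemma \ref{lem_ind_mat_properties}(1), since $\Gamma_G(C)$ is an induced subgraph of $G$). The three parts then reduce to showing $\nu_3(\Gamma_G(C)) \geq \nu_3(G_{z_0,i}) + \delta_i$ with $\delta_1 = 0$ and $\delta_2 = \delta_3 = 1$.

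For part (1), the removed leaves $z_0, \ldots, z_s$ all have level $\geq 2$, hence lie neither in $C$ nor in $\partial(C)$. Consequently, $\Gamma_{G_{z_0,1}}(C) = \Gamma_G(C) \setminus \{z_0, \ldots, z_s\}$ as induced subgraphs, and using the $3$-proximality of $G_{z_0,1}$ one obtains immediately
\begin{equation*}
\nu_3(G_{z_0,1}) = \nu_3(\Gamma_{G_{z_0,1}}(C)) \leq \nu_3(\Gamma_G(C)) < \nu_3(G).
\end{equation*}

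For parts (2) and (3), I would fix a maximum induced $3$-matching $M'$ of $\Gamma_{G_{z_0,i}}(C)$, which has size $\nu_3(G_{z_0,i})$ by $3$-proximality, and extend it by exactly one extra $3$-path: $z_0 y_0 z_1$ in part (2) (using $s \geq 1$), and $z_0 y_0 x_0$ in part (3). The assumption that $G_{z_0,i}$ still carries the cycle $C$---necessary for $3$-proximality to be meaningful there---forces $\level(y_0) \geq 2$ in part (2) and $\level(y_0) \geq 3$ in part (3). These level conditions place every vertex of the added $3$-path outside $\partial(C)$ and hence inside $\Gamma_G(C)$. The union remains an induced $3$-matching of $\Gamma_G(C)$ because $V(M') \subseteq V(G_{z_0,i})$ is disjoint from $N[y_0]$ in part (2) and from $N[\{y_0,x_0\}]$ in part (3), and every $G$-neighbor of a vertex of the added $3$-path already lies in these same removed sets; no crossing edge can therefore exist. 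This yields $\nu_3(\Gamma_G(C)) \geq \nu_3(G_{z_0,i}) + 1$, and combining with the strict inequality in the form $\nu_3(\Gamma_G(C)) \leq \nu_3(G) - 1$ (by integrality) gives $\nu_3(G_{z_0,i}) + 1 < \nu_3(G)$.

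The main bookkeeping hurdle lies in parts (2)--(3): verifying that the extension of $M'$ stays an induced matching. This rests entirely on the structural observation that the closed neighborhoods $N[y_0]$ and $N[\{y_0,x_0\}]$ simultaneously contain all vertices \emph{and} all $G$-neighbors of the added $3$-paths' vertices, and are precisely the sets removed when forming $G_{z_0,2}$ and $G_{z_0,3}$; this prevents stray edges or overlaps with $M'$.
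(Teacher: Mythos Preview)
Your argument is correct and follows the same strategy as the paper: pass from $\nu_3(G_{z_0,i})$ to $\nu_3(\Gamma_{G_{z_0,i}}(C))$ via proximality, embed into $\Gamma_G(C)$ (adding one extra $3$-path in parts (2) and (3)), and then conclude from $\nu_3(\Gamma_G(C)) < \nu_3(G)$. The one notable variation is that in part~(2) you extend by the path $z_0 y_0 z_1$ rather than the paper's choice $z_0 y_0 x_0$; your choice is slightly cleaner, since it sidesteps any concern about whether $x_0$ might lie in $\partial(C)$ when $\level(y_0)=2$.
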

\begin{proof} We prove the lemma by contradiction. 

For (1), assume that $G$ is not $3$-proximal, $G_{z_0,1}$ is $3$-proximal and $\nu_3(G_{z_0,1}) = \nu_3(G)$. Note that $\Gamma_1(C)$ is an induced subgraph of $\Gamma_G(C)$, where $\Gamma_1(C) = \Gamma_{G_{z_0,1}}(C)$. By Lemma \ref{lem_ind_mat_properties}, we have 
$$\nu_3(G) = \nu_3(G_{z_0,1}) = \nu_3(\Gamma_1(C)) \le \nu_3(\Gamma_G(C)) \le \nu_3(G).$$
Hence, $\nu_3(G) = \nu_3(\Gamma_G(C))$. In other words, $G$ is $3$-proximal, a contradiction.

For (2), assume that $s \ge 1$, $G$ is not $3$-proximal, $G_{z_0,2}$ is $3$-proximal and $\nu_3(G_{z_0,2}) + 1 = \nu_3(G)$. Since $G_{z_0,2}$ is $3$-proximal, it contains $C$. Let $\Gamma_2(C) = \Gamma_{G_{z_0,2}} (C)$. Note that $\Gamma_2(C) \cup \{z_0y_0x_0\}$ is an induced subgraph of $\Gamma_G(C)$. By Lemma \ref{lem_ind_mat_properties}, we have 
$$\nu_3(G) = 1 + \nu_3(G_{z_0,2}) = 1 + \nu_3(\Gamma_2(C)) \le \nu_3(\Gamma_G(C)) \le \nu_3(G).$$
Hence, $\nu_3(G) = \nu_3(\Gamma_G(C))$, a contradiction.

One can prove Part (3) similarly to Part (2).    
\end{proof}

\begin{lem}\label{lem_3_mat_proximal_2} Let $z_0$ be a leaf of the highest level of $G$ with $\level(z_0) = 1$. Assume that $v_1$ is the unique neighbor of $z_0$ and $N(v_1) = \{z_0,z_1,\ldots,z_s\} \cup \{v_2,v_m\}$. Assume that $G$ is not $3$-proximal and $G_{z_0,1}$ is $3$-proximal. Then $\nu_3(G_{z_0,1}) < \nu_3(G)$.
\end{lem}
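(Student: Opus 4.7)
The plan is to mimic the contradiction argument of Lemma \ref{lem_3_mat_proximal_1}(1), with one structural simplification that is special to the level-$1$ case. Suppose for contradiction that $\nu_3(G_{z_0,1}) = \nu_3(G)$. Since $G_{z_0,1}$ is assumed to be $3$-proximal, this immediately gives $\nu_3(G) = \nu_3(\Gamma_{G_{z_0,1}}(C))$, and the goal becomes to compare $\Gamma_{G_{z_0,1}}(C)$ with $\Gamma_G(C)$ and derive that $\nu_3(G) \le \nu_3(\Gamma_G(C))$, contradicting the assumption that $G$ is not $3$-proximal.

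The heart of the argument is the following identification of induced subgraphs. Because $z_0$ has level $1$ and $N(v_1) = \{z_0, z_1, \ldots, z_s\} \cup \{v_2, v_m\}$, each of the removed vertices $z_0, \ldots, z_s$ is a leaf of $G$ whose unique neighbor is $v_1 \in C$. In particular, all of them lie in $\partial_G(C) = N_G(C) \setminus C$. I would then verify the two obvious facts: (i) $V(\Gamma_{G_{z_0,1}}(C)) = V(\Gamma_G(C))$, and (ii) removing leaves attached to $C$ does not change the edges among the remaining vertices, so the two induced subgraphs coincide as graphs. For (i), write
\[
V(\Gamma_{G_{z_0,1}}(C)) = \bigl( V(G) \setminus \{z_0,\ldots,z_s\} \bigr) \setminus \bigl( \partial_G(C) \setminus \{z_0,\ldots,z_s\} \bigr) = V(G) \setminus \partial_G(C),
\]
using that removing the leaves $z_0,\ldots,z_s$ does not alter the neighborhood of $C$ outside these vertices, so $\partial_{G_{z_0,1}}(C) = \partial_G(C) \setminus \{z_0,\ldots,z_s\}$.

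Once this identification $\Gamma_{G_{z_0,1}}(C) = \Gamma_G(C)$ is established, the contradiction is immediate: we obtain
\[
\nu_3(G) = \nu_3(G_{z_0,1}) = \nu_3(\Gamma_{G_{z_0,1}}(C)) = \nu_3(\Gamma_G(C)),
\]
so $G$ is $3$-proximal, contrary to the hypothesis. Hence $\nu_3(G_{z_0,1}) < \nu_3(G)$.

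I do not anticipate a serious obstacle here; the argument is essentially a bookkeeping check, and the only subtle point is the innocuous-looking but crucial observation that all vertices deleted in passing from $G$ to $G_{z_0,1}$ already lie in $\partial_G(C)$. That fact is what makes the level-$1$ case even cleaner than the level-$\ge 2$ case treated in Lemma \ref{lem_3_mat_proximal_1}(1), where one only gets an inclusion of induced subgraphs rather than equality.
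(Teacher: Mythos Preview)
Your argument is correct and follows the same contradiction strategy as the paper's proof: assume $\nu_3(G_{z_0,1}) = \nu_3(G)$, use $3$-proximality of $G_{z_0,1}$, and compare $\Gamma_{G_{z_0,1}}(C)$ with $\Gamma_G(C)$ to force $\nu_3(G) = \nu_3(\Gamma_G(C))$. The only difference is that the paper is content with the weaker observation that $\Gamma_{G_{z_0,1}}(C)$ is an \emph{induced subgraph} of $\Gamma_G(C)$ (and then invokes Lemma~\ref{lem_ind_mat_properties} to get $\le$), whereas you note the sharper fact that in the level-$1$ situation the two graphs are actually equal; either version closes the contradiction.
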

\begin{proof} Assume by contradiction that $\nu_3(G_{z_0,1}) = \nu_3(G)$. Let $\Gamma_1(C) = \Gamma_{G_{z_0,1}}(C)$. Note that $\Gamma_1(C)$ is an induced subgraph of $\Gamma_G(C)$. By Lemma \ref{lem_ind_mat_properties}, we have 
$$\nu_3(G) = \nu_3(G_{z_0,1}) = \nu_3(\Gamma_1(C)) \le \nu_3(\Gamma_G(C)) \le \nu_3(G).$$
Hence, $\nu_3(G) = \nu_3(\Gamma_G(C))$, a contradiction.
\end{proof}

\section{Betti splittings for 3-path ideals of graphs with at most one cycle}\label{sec_splittings}

In this section, we establish certain Betti splittings for the $3$-path ideals of graphs with at most one cycle. First, we prove some general results. 

\begin{lem}\label{lem_splitting_a_leaf} Let $G$ be a simple graph with a leaf $z_0$. Let $y_0$ be the unique neighbor of $z_0$. Then     
$$I_3(G) = z_0y_0 \left ( x \mid x \in N(y_0) \setminus \{z_0\} \right ) + I_3(G_1),$$
where $G_1 = G \setminus \{z_0\}$ is the induced subgraph of $G$ on $V(G) \setminus \{z_0\}$. 
\end{lem}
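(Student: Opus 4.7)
The plan is to verify the equality of these two monomial ideals by matching their minimal monomial generators. Since $I_3(G)$ is generated by the set of monomials $u_1u_2u_3$ where $u_1,u_2,u_3$ is a $3$-path of $G$, it suffices to show that each such generator belongs to one of the two summands on the right, and vice versa.

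I would split the generators of $I_3(G)$ according to whether $z_0$ appears among $\{u_1,u_2,u_3\}$. If $z_0 \notin \{u_1,u_2,u_3\}$, then $u_1,u_2,u_3 \in V(G_1)$, and because $G_1$ is the induced subgraph of $G$ on $V(G)\setminus\{z_0\}$, the edges $\{u_1,u_2\}$ and $\{u_2,u_3\}$ of $G$ remain edges of $G_1$. Hence $u_1u_2u_3$ is a $3$-path of $G_1$, so $u_1u_2u_3 \in I_3(G_1)$.

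If $z_0 \in \{u_1,u_2,u_3\}$, I would use that $z_0$ is a leaf with unique neighbor $y_0$. The middle vertex of a $3$-path has at least two neighbors in the path, so $z_0$ cannot be the middle vertex; thus $z_0$ is an endpoint, say $u_1 = z_0$ (the case $u_3 = z_0$ is symmetric). Then $u_2$ must be adjacent to $z_0$, forcing $u_2 = y_0$, and $u_3$ is a neighbor of $y_0$ distinct from $z_0$. Hence $u_1u_2u_3 = z_0 y_0 x$ with $x \in N(y_0)\setminus\{z_0\}$, which lies in $z_0 y_0 (x \mid x \in N(y_0)\setminus \{z_0\})$.

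For the reverse inclusion, each $3$-path of $G_1$ is in particular a $3$-path of $G$, and for every $x \in N(y_0)\setminus\{z_0\}$ the vertices $z_0,y_0,x$ are pairwise distinct (since $y_0 \notin N(y_0)$ and $x \neq z_0$) and form a $3$-path of $G$, so $z_0 y_0 x \in I_3(G)$. This gives the opposite containment and completes the proof. There is no serious obstacle here; the only point requiring care is the observation that a leaf can only occur as an endpoint of a $3$-path, which forces the shape $z_0 y_0 x$ for generators involving $z_0$.
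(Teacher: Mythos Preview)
Your proof is correct and follows essentially the same approach as the paper's: split the generators of $I_3(G)$ according to whether $z_0$ occurs, and use that a leaf can only be an endpoint of a $3$-path to force the form $z_0y_0x$. Your version is simply more detailed, explicitly verifying the reverse inclusion that the paper leaves implicit.
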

\begin{proof}
    If a $3$-path of $G$ does not contain $z_0$ then it must be a $3$-path of $G_1$. Now, assume that $p$ is a $3$-path of $G$ that contains $z_0$. Since $z_0$ is a leaf with unique neighbor $y_0$, we must have $p$ is of the form $z_0y_0 x$ with $x \in N(y_0) \setminus \{z_0\}$. The conclusion follows.
\end{proof}

\begin{lem}\label{lem_colon_a_vertex} Let $G$ be a simple graph and $y$ be a vertex of $G$. Then 
    $$I_3(G) : y = (uv \mid u \neq v \in N(y) ) + (uw \mid u \in N(y), w \in N(u) \setminus N[y]) + I_3(G_2),$$
    where $G_2$ is the induced subgraph of $G$ on $V(G) \setminus N[y]$. 
\end{lem}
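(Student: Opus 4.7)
The plan is to exploit the standard fact for monomial ideals: $I_3(G):y$ is the monomial ideal generated by $\{f/\gcd(f,y)\}$ as $f$ ranges over the minimal monomial generators of $I_3(G)$, which here are exactly the $3$-paths of $G$. The proof then reduces to a direct case analysis based on how $y$ appears (or fails to appear) in a given $3$-path, combined with the obvious containment relations.

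For the inclusion RHS $\subseteq$ LHS, I verify that each class of generators on the right lies in the colon. If $u$ and $v$ are distinct vertices of $N(y)$, then $u,y,v$ is a $3$-path of $G$, so $y \cdot uv \in I_3(G)$ and hence $uv \in I_3(G):y$. If $u \in N(y)$ and $w \in N(u) \setminus N[y]$, then $y,u,w$ are pairwise distinct and form a $3$-path, so $y \cdot uw \in I_3(G)$. Finally, $I_3(G_2) \subseteq I_3(G) \subseteq I_3(G):y$ since $G_2$ is an induced subgraph of $G$.

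For the reverse inclusion, I take a $3$-path $abc$ of $G$ and show that $abc/\gcd(abc,y)$ lies in the RHS. If $y = b$, then $a,c \in N(y)$ are distinct and $ac$ belongs to the first summand. If $y$ is an endpoint, say $y = a$, then $b \in N(y)$ and $c \in N(b) \setminus \{y\}$; here either $c \in N(y)$, in which case $bc$ lies in the first summand, or $c \in N(b) \setminus N[y]$, in which case $bc$ lies in the second. If $y$ does not occur in $a,b,c$, then either all three vertices lie outside $N[y]$ and $abc \in I_3(G_2)$, or some vertex of the path lies in $N(y)$, in which case one of the two length-two subwords of $abc$ already belongs to the first or second summand and $abc$ is a multiple of it.

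I expect no serious obstacle; the only point requiring care is the last subcase, when $y$ is not in the path but the path meets $N(y)$. Here one uses the disjoint partition $V(G) = \{y\} \sqcup N(y) \sqcup (V(G) \setminus N[y])$ to conclude that any vertex of the path adjacent to a vertex of $N(y)$ is either itself in $N(y)$ (handled by the first summand) or outside $N[y]$ (handled by the second). With this bookkeeping in place, the argument is routine.
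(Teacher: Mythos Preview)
Your proof is correct and follows essentially the same approach as the paper: both argue by mutual inclusion, checking that each generator on the right multiplies by $y$ into $I_3(G)$, and then taking an arbitrary $3$-path $p$ and doing a case analysis on whether $y\in p$ and whether $\supp p$ meets $N(y)$. Your write-up is slightly more explicit in separating the ``$y$ is the middle vertex'' and ``$y$ is an endpoint'' subcases, but the underlying argument is the same.
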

\begin{proof}
    We denote by $L$ the ideal on the right-hand side. Note that $yuv$ and $yuw$ with $u\neq v \in N(y)$ and $w \in N(u) \setminus N[y]$ are $3$-paths of $G$. Hence, the left-hand side contains the right-hand side. 

    Now, let $p$ be a $3$-path of $G$. If $y \in p$ then $p$ must be of the form $yuv$ and $yuw$ as before, and we have $p/y \in L$. Thus, we may assume that $y \notin p$. If $\supp p \cap N[y] = \emptyset$ then $p$ is a $3$-path of $G_2$ and we are done. Thus, we may assume that $u \in \supp p \cap N(y)$. Now, any $3$-path containing $u$ must also contain a neighbor of $u$. Since we assume that $y \notin p$, hence there exists $w \in N(u) \setminus \{y\}$ such that $uw | p$. In particular, we have $uw \in L$. Hence, $p \in L$. The conclusion follows.
\end{proof}

\begin{lem}\label{lem_splitting_1} Let $G$ be a simple graph with a leaf $z_0$. Let $y_0$ be the unique neighbor of $z_0$. We denote by $G_1 = G \setminus \{z_0\}$ the induced subgraph of $G$ on $V(G) \setminus \{z_0\}$. We define the following two ideals of $S$
\begin{align*}
J(z_0) & = (x \mid x \in N(y_0) \setminus \{z_0\}),   \\
L(z_0) &= (uv \mid u \neq v \in N(y_0) \setminus \{z_0\}) + (uw \mid u \in N(y_0)\setminus \{z_0\}, w \in N(u) \setminus N[y_0]) \\
&+ \sum_{u\in N(y_0) \setminus \{z_0\}} u I_3(G_u),
\end{align*}
where $G_u = G \setminus N[\{u,y_0\}]$ is the induced subgraph of $G$ on $V(G) \setminus N[\{u,y_0\}]$. Then the decomposition $I_3(G) = z_0 y_0J(z_0) + I_3(G_1)$ is a Betti splitting and 
$$z_0 y_0 J(z_0) \cap I_3(G_1) = z_0y_0 L(z_0).$$
\end{lem}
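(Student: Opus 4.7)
The plan is to prove the two claims in sequence: first identify the intersection $z_0 y_0 J(z_0) \cap I_3(G_1) = z_0 y_0 L(z_0)$ by a direct monomial computation, then verify the Betti splitting via $\Tor$-vanishing of the two natural inclusions, invoking Lemma \ref{lem_splitting_criterion_1}.

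The equality $I_3(G) = z_0 y_0 J(z_0) + I_3(G_1)$ is immediate from Lemma \ref{lem_splitting_a_leaf}. For the intersection, I first verify the containment $z_0 y_0 L(z_0) \subseteq z_0 y_0 J(z_0) \cap I_3(G_1)$ on each generator type of $L(z_0)$: for a type (a) generator $uv$, the monomial $z_0 y_0 uv$ is divisible by the $3$-path $u y_0 v$ of $G_1$; for type (b), $z_0 y_0 u w$ is divisible by the $3$-path $y_0 u w$ of $G_1$; for type (c), $z_0 y_0 u g$ is divisible by the $3$-path $g$ of $G_u \subseteq G_1$. For the reverse inclusion, I use the fact that the intersection of two monomial ideals is generated by pairwise LCMs, and compute $\lcm(z_0 y_0 u, p)$ for each $u \in N(y_0) \setminus \{z_0\}$ and each $3$-path $p$ of $G_1$. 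A case analysis based on whether $y_0 \in \supp p$ and on how $u$ meets $\supp p$ matches each such LCM with a type (a), (b), or (c) generator of $L(z_0)$.

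For the Betti splitting, Lemma \ref{lem_splitting_criterion_1} reduces the problem to $\Tor$-vanishing of the inclusions $\iota_1 : z_0 y_0 L(z_0) \hookrightarrow z_0 y_0 J(z_0)$ and $\iota_2 : z_0 y_0 L(z_0) \hookrightarrow I_3(G_1)$. A direct application of Lemma \ref{lem_splitting_criterion_2} is not available because dividing a generator of $z_0 y_0 L(z_0)$ by $z_0$ or by $y_0$ produces a monomial lying in neither target; instead, I factor each inclusion through a submodule on which $\Tor$-vanishing is transparent. For $\iota_2$, verifying the routine containment $y_0 L(z_0) \subseteq I_3(G_1)$ yields $z_0 y_0 L(z_0) \subseteq z_0 \cdot I_3(G_1)$, so $\iota_2$ factors through $z_0 \cdot I_3(G_1) \hookrightarrow I_3(G_1)$; the latter is, up to a degree shift, multiplication by $z_0 \in \m$ on $I_3(G_1)$, which acts as zero on $\Tor_i^S(\k, -)$. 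For $\iota_1$, every generator of $L(z_0)$ has the form $u \cdot h$ with $u \in N(y_0)\setminus\{z_0\}$ and $\deg h \ge 1$, so $z_0 y_0 L(z_0) \subseteq \m \cdot z_0 y_0 J(z_0)$. Since $J(z_0)$ is generated by a subset of the variables of $S$, it admits a linear Koszul resolution, and hence $z_0 y_0 J(z_0)$ has a $3$-linear minimal free resolution. Consequently $\Tor_i^S(\k, z_0 y_0 J(z_0))$ is concentrated in internal degree $i+3$, while $\Tor_i^S(\k, \m \cdot z_0 y_0 J(z_0))$ is supported in internal degrees $\ge i+4$; the induced graded map is therefore zero in every internal degree, proving $\iota_1$ is $\Tor$-vanishing.

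The main technical obstacle is the $\Tor$-vanishing of $\iota_1$, since the $\partial^*$ criterion of Lemma \ref{lem_splitting_criterion_2} is not directly applicable; the decisive input is the degree comparison between $\Tor_i^S(\k, \m I)$ and $\Tor_i^S(\k, I)$ for ideals $I$ with a linear resolution.
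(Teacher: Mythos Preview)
Your proof is correct, but it takes a more hands-on route than the paper in both parts of the argument.

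For the Betti splitting, the paper dispatches the claim in one line by citing \cite[Corollary~4.12]{NV1} together with the observation that $y_0 J(z_0)$ has a linear free resolution; this is a packaged result to the effect that a decomposition with one linearly-resolved summand is automatically a Betti splitting. You instead verify the two $\Tor$-vanishing conditions of Lemma~\ref{lem_splitting_criterion_1} directly: for $\iota_1$ you use the degree-shift argument (since $z_0y_0J(z_0)$ is $3$-linear, any map from an ideal generated in degree $\ge 4$ must vanish on $\Tor$), and for $\iota_2$ you factor through $z_0\cdot I_3(G_1)$ and note that multiplication by $z_0\in\m$ kills $\Tor_i^S(\k,-)$. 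Your argument is essentially a special-case unpacking of what the cited corollary provides, with the pleasant feature that the $\iota_2$ step needs nothing about linearity at all.

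For the intersection, the paper factors $z_0y_0J(z_0)\cap I_3(G_1)=z_0y_0\bigl(J(z_0)\cap(I_3(G_1):y_0)\bigr)$ and then applies Lemma~\ref{lem_colon_a_vertex} to compute the colon ideal, reducing the remaining verification to a short check on $J(z_0)\cap I_3(G_2)$. You instead compute the LCMs $\lcm(z_0y_0u,p)$ directly and run the full case analysis on $\supp p$. Both arguments work; the paper's route is shorter because the colon-ideal lemma absorbs most of the casework, while yours is more self-contained but reproves that lemma implicitly.
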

\begin{proof} For simplicity of notation, in the proof, we use $J$ for $J(z_0)$ and $L$ for $L(z_0)$. By Lemma \ref{lem_splitting_a_leaf}, we have $I_3(G) = z_0 y_0 J + I_3(G_1)$. By \cite[Corollary 4.12]{NV1} and the fact that $y_0J$ has a linear free resolution, we deduce that the decomposition $I_3(G) = z_0 y_0 J + I_3(G_1)$ is a Betti splitting. Now, we have
$$z_0 y_0 J \cap I_3(G_1) = z_0 y_0 \left (  J \cap (I_3(G_1):y_0) \right ).$$
By Lemma \ref{lem_colon_a_vertex},
$$I_3(G_1) : y_0 = (uv \mid u \neq v \in N(y_0) ) + (uw \mid u \in N(y_0),w \in N(u) \setminus N[y_0]) + I_3(G_2)$$
where $G_2$ is the induced subgraph of $G$ on $V(G) \setminus N[y_0]$. We need to prove that 
$$J \cap (I_3(G_1) : y_0) = L.$$

From the definition of $L$, $L \subset J$ and $L \subset (I_3(G_1):y_0)$. It remains to prove that for any $f \in J \cap I_3(G_2)$, we have $f \in L$. There exist $u \in J$ and $p \in I_3(G_2)$ such that $f = \lcm (u,p) $. Note that $G_2 = G \setminus N[y_0]$, hence $u \notin V(G_2)$. Thus, $f = up$. If $p \cap N(u) \neq \emptyset$ then $up \in (uv \mid u \neq v \in N(y_0) ) + (uw \mid u \in N(y_0),w \in N(u) \setminus N[y_0])$. If $p \cap N(u) = \emptyset$ then $up \in u I_3(G_u)$. The conclusion follows.     
\end{proof}

We now return to the case $G$ is a connected graph with at most one cycle $C$ on the vertices $v_1,\ldots,v_m$. We use the notations in subsection \ref{sub_ind_mat}. In particular, attached to each $v_i$, we have a tree $T_i$. The level of a vertex $y \in T_i$ is the distance from $y$ to the root $v_i$. Let $z_0 \in T_i$ be a leaf of $G$ of the highest level among all leaves in $T_i$ and $y_0$ its unique neighbor. We also use the following notation. For a nonzero monomial $f \in S$, the support of $f$, denoted by $\supp f$, is the set of variables $x_i$ such that $x_i$ divides $f$. For a monomial ideal $I$ of $S$, we define the support of $I$ by
$$\supp I = \bigcup \left ( \supp f \mid f \text{ is a minimal monomial generator of } I \right ).$$

First, assume that $\level(z_0) \ge 2$. Let $N(y_0) = \{z_0,z_1, \ldots,z_s\} \cup \{x_0\}$ where $z_1, \ldots, z_s$ are leaves of $G$ having the same level as $z_0$ and $x_0$ is the unique neighbor of $y_0$ with smaller level. Recall that $G_{z_0,1} = G \setminus \{z_0,\ldots,z_s\}$, $G_{z_0,2} = G\setminus N[y_0]$ and $G_{z_0,3} = G \setminus N[\{y_0,x_0\}]$. We define the following ideals of $S$
\begin{equation}\label{eq_level_2_0}
    \begin{split}
        U & = (z_i z_j \mid 1 \le i < j \le s) + (z_1,\ldots,z_s) I_3(G_{z_0,2}) \\
        V & = x_0 ( z_1,\ldots,z_s,u \in N(x_0) \setminus \{y_0\} ) + x_0 I_3(G_{z_0,3}).
    \end{split}
\end{equation}
Since $\level(z_0) \ge 2$, $x_0,y_0,z_0$ belong to a tree attached to some vertex $v_j$. Hence, $N(x_0) \setminus N[y_0] = N(x_0) \setminus \{y_0\}$. By Lemma \ref{lem_splitting_1}, we get $L(z_0) = U + V$. Note that, if $s = 0$ then $U = (0)$.

\begin{lem}\label{lem_splitting_level_2} Assume that $s \ge 1$. Then the decomposition $L(z_0) = U +V$ is a Betti splitting and 
    $$U  \cap V = x_0 U.$$    
\end{lem}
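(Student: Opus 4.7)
My plan is to prove $U \cap V = x_0 U$ directly from the monomial structure of the two ideals, and then deduce the Betti splitting from the Tor-vanishing criterion of Lemma \ref{lem_splitting_criterion_1}. Two structural observations drive everything: (i) every minimal generator of $V$ is divisible by $x_0$, so $V \subseteq (x_0)$; and (ii) no minimal generator of $U$ involves $x_0$, since the generators $z_iz_j$ and $z_ip$ (with $p$ a minimal generator of $I_3(G_{z_0,2})$) are built from variables in $\{z_1,\ldots,z_s\} \cup V(G_{z_0,2})$, and $x_0 \in N[y_0]$ has been removed from $G_{z_0,2}$.

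Combining (i) and (ii), a standard monomial divisibility argument gives $U \cap (x_0) = x_0 U$, hence $U \cap V \subseteq x_0 U$. The reverse inclusion reduces to checking that each generator $x_0 z_i z_j$ and $x_0 z_i p$ of $x_0 U$ is divisible by some $x_0 z_i \in V$, which is immediate.

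To verify the Betti splitting, by Lemma \ref{lem_splitting_criterion_1} it suffices to show that both inclusions $x_0 U \hookrightarrow U$ and $x_0 U \hookrightarrow V$ are Tor-vanishing. For the second, I introduce the auxiliary ideal $M = (z_1,\ldots,z_s) + (u \mid u \in N(x_0)\setminus\{y_0\}) + I_3(G_{z_0,3})$, so that $V = x_0 M$. Multiplication by $x_0$ gives an $S$-module isomorphism (with a degree shift) of $U$ onto $x_0 U$ and of $M$ onto $V$, under which the inclusion $x_0 U \hookrightarrow V$ corresponds to $U \hookrightarrow M$. By Lemma \ref{lem_splitting_criterion_2} this reduces to checking $\partial^*(U) \subseteq M$. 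For the generator $z_iz_j$ one gets $z_i, z_j \in M$; for $z_i p$ one gets $z_i (p/x_k) \in (z_i) \subseteq M$ and needs $p \in M$. For the latter, if $p$ involves some $u \in N(x_0)\setminus\{y_0\}$ then $p \in (u) \subseteq M$; otherwise $p$ is a 3-path of $G \setminus (N[y_0] \cup N(x_0)) = G_{z_0,3}$, using that $x_0 \in N(y_0)$ forces $N[y_0] \cup N[x_0] = N[y_0] \cup N(x_0)$, so $p \in I_3(G_{z_0,3}) \subseteq M$.

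The main obstacle is the first inclusion $x_0 U \hookrightarrow U$, for which Lemma \ref{lem_splitting_criterion_2} is not directly applicable: $\partial^*(x_0 U)$ contains monomials such as $x_0 z_j$ that are not in $U$. I bypass this by factoring the inclusion as $U \xrightarrow{\cdot x_0} x_0 U \hookrightarrow U$, whose composition is multiplication by $x_0$ on $U$. Since $\Tor_i^S(\k, U)$ is a $\k$-vector space and $x_0 \in \m$, multiplication by $x_0$ acts as zero on $\Tor_i^S(\k, U)$, while the first factor becomes an isomorphism on Tor. Hence the inclusion $x_0 U \hookrightarrow U$ is Tor-vanishing, and combined with the previous paragraph this yields the Betti splitting via Lemma \ref{lem_splitting_criterion_1}.
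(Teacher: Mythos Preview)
Your proof is correct and follows essentially the same route as the paper: both use that $x_0\notin\supp U$ to obtain $U\cap V=x_0U$, reduce the Tor-vanishing of $x_0U\hookrightarrow V$ to that of $U\hookrightarrow M$ (the paper's $W$) via multiplication by $x_0$, and then apply Lemma~\ref{lem_splitting_criterion_2} by checking $I_3(G_{z_0,2})\subseteq M$. The only difference is cosmetic: the paper asserts the Tor-vanishing of $x_0U\hookrightarrow U$ directly from $x_0\notin\supp U$, whereas you spell out the factorization argument explicitly; both are valid.
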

\begin{proof} By Eq. \eqref{eq_level_2_0}, we have $L(z_0) = U + V$. Since $x_0 \notin \supp U$, we have 
    $$U \cap V = x_0 \left ( U \cap (z_1,\ldots,z_s, u \mid u \in N(x_0) \setminus \{y_0\} )  + I_3(G_3) \right ) = z_0 U.$$
    
Since $x_0 \notin \supp U$, the inclusion map $x_0 U \to U$ is $\Tor$-vanishing. By Lemma \ref{lem_splitting_criterion_1}, it remains to prove that the map $U \to W$ is $\Tor$-vanishing, where $W = (z_1, \ldots,z_s,u \mid u \in N(x_0) \setminus \{ y_0 \} ) + I_3(G_3)$. By Lemma \ref{lem_splitting_criterion_2}, it suffices to prove that for any minimal generator $f$ of $U$ and any variable $x$ that divides $f$, we have $f/x \in W$. By the definition of $U$, it suffices to prove that $I_3(G_2) \subseteq W$. Indeed, if $p \in I_3(G_2)$ that is not in $I_3(G_3)$ then $p$ must contain a variable $u \in N(x_0) \setminus \{y_0\}$. Hence, $p \in W$. The conclusion follows.
\end{proof}

We now consider the case $\level(z_0) = 1$. We may assume that $v_1$ is the unique neighbor of $z_0$ and that $N(v_1) = \{z_0,z_1,\ldots,z_s\} \cup \{ v_2,v_m\}$. Recall that $G_{z_0,1} = G \setminus \{z_0,\ldots,z_s\}$, $G_{z_0,2} = G\setminus N[v_1]$, $G_{z_0,3} = G \setminus N[\{v_1,v_2\}]$, and $G_{z_0,4} = G \setminus N[\{v_1,v_m\}]$. We define the following ideals  

\begin{equation}
    \begin{split}  
    X &= (z_i z_j \mid 1 \le i < j \le s) + (z_1,\ldots,z_s) I_3(G_{z_0,2})\\
    Y &= v_2 \left ( z_1,\ldots,z_s, u \mid u \in N(v_2) \setminus N[v_1] \right ) + v_2 I_3(G_{z_0,3})\\
    Z   &= v_m \left ( z_1,\ldots,z_s,v_2, w \mid w \in N(v_m) \setminus N[v_1] \right ) + v_m I_3(G_{z_0,4}).
    \end{split}
\end{equation}
By Lemma \ref{lem_splitting_1}, we have $L(z_0) = X + Y + Z$. Note that, if $s = 0$ then $X = (0)$. 
\begin{lem}\label{lem_splitting_level_1_1} The decomposition $L(z_0) = (X+Y) + Z$ is a Betti splitting and $(X +Y) \cap Z = v_m (X + Y)$.
\end{lem}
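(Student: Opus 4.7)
The plan is to follow the strategy used in the proof of Lemma~\ref{lem_splitting_level_2}. First I would observe that $v_m\notin\supp(X+Y)$, since $v_m\in N[v_1]$ is excluded from $V(G_{z_0,2})$ and $V(G_{z_0,3})$, and $v_m\notin\{z_1,\ldots,z_s,v_2\}\cup(N(v_2)\setminus N[v_1])$. Writing $Z=v_m M$ with $M=(z_1,\ldots,z_s,v_2,w\mid w\in N(v_m)\setminus N[v_1])+I_3(G_{z_0,4})$, the containment $X+Y\subseteq M$ is immediate: $X\subseteq(z_1,\ldots,z_s)\subseteq M$, $Y\subseteq(v_2)\subseteq M$, and any $3$-path in $G_{z_0,2}$ or $G_{z_0,3}$ either lies in $I_3(G_{z_0,4})$ or uses a vertex in $N(v_m)\setminus N[v_1]$. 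The identity $(X+Y)\cap Z=v_m(X+Y)$ then follows from the standard $\lcm$ computation, since $v_m$ divides every generator of $Z$ but no generator of $X+Y$.

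By Lemma~\ref{lem_splitting_criterion_1} it remains to show that both inclusions $v_m(X+Y)\hookrightarrow X+Y$ and $v_m(X+Y)\hookrightarrow Z$ are $\Tor$-vanishing. The first is immediate: since $v_m\notin\supp(X+Y)$, the inclusion factors via the isomorphism $X+Y\cong v_m(X+Y)$ together with multiplication by $v_m\in\mathfrak{m}$ on $X+Y$, which acts as zero on $\Tor^S(k,-)$. For the second, the commutative square with vertical multiplication-by-$v_m$ isomorphisms identifies it with $\iota: X+Y\hookrightarrow M$, reducing the task to showing $\iota$ is $\Tor$-vanishing.

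To handle $\iota$ my plan is an iterated Betti splitting. First I would show $X+Y$ is itself a Betti splitting with $X\cap Y=v_2 X$ (using $v_2\notin\supp X$), verifying the two associated $\Tor$-vanishings as follows: $v_2 X\hookrightarrow X$ is $\Tor$-vanishing by the multiplication-by-$v_2$ argument, and $v_2 X\hookrightarrow Y$ reduces, via the $v_2$-isomorphism coming from $Y=v_2 B$ with $B=(z_1,\ldots,z_s,u\mid u\in N(v_2)\setminus N[v_1])+I_3(G_{z_0,3})$, to $X\hookrightarrow B$, which is handled by Lemma~\ref{lem_splitting_criterion_2} after checking $\partial^*(X)\subseteq B$. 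Next I would show that each of the inclusions $X\hookrightarrow M$, $Y\hookrightarrow M$, and $X\cap Y=v_2 X\hookrightarrow M$ is $\Tor$-vanishing: $X\hookrightarrow M$ by Lemma~\ref{lem_splitting_criterion_2} with $\partial^*(X)\subseteq M$; $Y\hookrightarrow M$ by factoring through the principal ideal $(v_2)\subseteq M$, since $(v_2)\cong S(-1)$ has projective dimension zero and $Y\subseteq\mathfrak{m}(v_2)$, so $Y\to(v_2)$ is $\Tor$-vanishing and composition with $(v_2)\hookrightarrow M$ preserves $\Tor$-vanishing; and $v_2 X\hookrightarrow M$ by factoring as $v_2 X\hookrightarrow X\hookrightarrow M$. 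Combining the Betti splitting of $X+Y$ with $\Tor$-vanishing of these three maps into $M$ via a chain-map argument (lifting $\iota$ block-wise using $\tilde\iota_X,\tilde\iota_Y,\tilde\iota_{X\cap Y}$, each of which has entries in $\mathfrak{m}$) then yields $\Tor$-vanishing of $\iota$.

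The main obstacle is this combination step. Direct application of Lemma~\ref{lem_splitting_criterion_2} to $X+Y\hookrightarrow M$ fails because $\partial^*(X+Y)$ contains the variables $u\in N(v_2)\setminus N[v_1]$ coming from the generators $v_2u\in Y$, and such $u$ need not lie in $M$, whose linear generators are only $z_i$, $v_2$, and vertices in $N(v_m)\setminus N[v_1]$. Circumventing this forces the detour through the internal Betti splitting of $X+Y$, where the $\Tor$-vanishing of $Y\to M$ becomes accessible via the principal factor $(v_2)$ rather than via the $\partial^*$ criterion, and the contribution of $X\cap Y$ to the resolution of $X+Y$ is controlled by the easy map $v_2X\hookrightarrow M$.
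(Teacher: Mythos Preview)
Your diagnosis of the obstacle is accurate: for a generator $v_2u$ of $Y$ with $u\in N(v_2)\setminus N[v_1]$ one has $v_2u/v_2=u\notin M$ in general (take $u=v_3$ with $m\ge 5$, or any tree neighbour of $v_2$), so Lemma~\ref{lem_splitting_criterion_2} does not apply directly to $X+Y\hookrightarrow M$. The paper reduces, exactly as you do, to this inclusion and then invokes Lemmas~\ref{lem_splitting_criterion_1} and~\ref{lem_splitting_criterion_2}, verifying only that $I_3(G_{z_0,2})\subseteq M$; that handles the generators of $X$ and the generators $v_2z_i$, $v_2q$ of $Y$, but is silent on $v_2u$. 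So you have put your finger on a point the paper's argument glosses over.

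Your proposed repair, however, has its own gap at the combination step. Granting that $X\to M$, $Y\to M$ and $v_2X\to M$ are each $\Tor$-vanishing and that $X+Y=X+Y$ is a Betti splitting, the block-wise lift you describe is \emph{not} a chain map on the mapping-cone resolution $C(\phi)_i=F^X_i\oplus F^Y_i\oplus F^{X\cap Y}_{i-1}$. The ansatz $\tilde\iota_i=(\iota_X)_i+(\iota_Y)_i+\psi_i$ forces $\psi$ to be a degree-$+1$ null-homotopy of $\Theta:=\iota_X\phi_X+\iota_Y\phi_Y$, not the degree-$0$ lift $\tilde\iota_{X\cap Y}$ you invoke. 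Such a $\psi$ exists (since $\Theta$ lifts the zero map $c\mapsto c-c$), and $\Theta$ even has entries in $\mathfrak m^{2}$ because both $\iota_X,\iota_Y$ and $\phi_X,\phi_Y$ have entries in $\mathfrak m$; but there is no general principle guaranteeing that $\psi$ can be chosen with entries in $\mathfrak m$. The obstruction is that the induced map $F^M_i/\mathfrak m F^M_i\to \mathfrak m F^M_{i-1}/\mathfrak m^2 F^M_{i-1}$ need not be injective wherever $d^M_i$ carries entries in $\mathfrak m^2$, and that occurs here since $M$ mixes linear generators $z_j,v_2,w$ with the cubic generators of $I_3(G_{z_0,4})$. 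Thus ``$\Tor$-vanishing on the pieces $\Rightarrow$ $\Tor$-vanishing on the sum'' is not available as a black box, and the plan as written does not close the argument.
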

\begin{proof}
    Since $v_m \notin \supp (X + Y)$, we have 
    \begin{align*}
        (X + Y) \cap Z & = v_m \left ( (X+Y)  \cap ( (z_1,\ldots,z_s,v_2, w \mid w \in N(v_m) \setminus N[v_1] ) + I_3(G_4) ) \right ) \\
        &= v_m(X +Y).
    \end{align*}
    By Lemma \ref{lem_splitting_criterion_1} and Lemma \ref{lem_splitting_criterion_2}, it suffices to prove that $I_3(G_{z_0,2})  \subseteq (z_1,\ldots,z_s,v_2,w \mid w \in N(v_m) \setminus N[v_1]) + I_3(G_{z_0,4})$. Indeed, if $p \in I_3(G_{z_0,2})$ that is not in $I_3(G_{z_0,4})$ then $p$ must contain a variable $u \in N(v_m) \setminus N[v_1]$. The conclusion follows.
\end{proof}

\begin{lem}\label{lem_splitting_level_1_2} Assume that $s \ge 1$. The decomposition $(X+Y) = X+Y$ is a Betti splitting and $X \cap Y = v_2 X$.
\end{lem}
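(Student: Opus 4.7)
The plan is to follow the template set by Lemma~\ref{lem_splitting_level_2}, exploiting the fact that $Y$ shares with $V$ there the useful factor structure $Y = v_2 Y'$, where
$$Y' = (z_1,\ldots,z_s, u \mid u \in N(v_2) \setminus N[v_1]) + I_3(G_{z_0,3}).$$
First I would record that every minimal monomial generator of $X$ is either $z_iz_j$ or $z_i p$ with $p$ a $3$-path in $G_{z_0,2} = G \setminus N[v_1]$; since $v_2 \in N(v_1)$, the variable $v_2$ appears in none of these. Consequently, for any minimal generator $f$ of $X$ and any minimal generator $v_2 g'$ of $Y$, we have $\lcm(f,v_2g') = v_2\lcm(f,g')$, which gives $X \cap Y = v_2 (X \cap Y')$. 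Moreover $X \subseteq (z_1,\ldots,z_s) \subseteq Y'$, so $X \cap Y' = X$, and hence $X \cap Y = v_2 X$.

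By Lemma~\ref{lem_splitting_criterion_1}, the Betti splitting assertion reduces to showing that both inclusions $v_2 X \hookrightarrow X$ and $v_2 X \hookrightarrow Y$ are $\Tor$-vanishing. The first is immediate from $v_2 \notin \supp X$, exactly as in the proof of Lemma~\ref{lem_splitting_level_2}. For the second, after cancelling the nonzerodivisor $v_2$ via the graded isomorphisms $X(-1) \xrightarrow{\sim} v_2 X$ and $Y'(-1) \xrightarrow{\sim} Y$ (given by multiplication by $v_2$), the inclusion $v_2 X \hookrightarrow Y$ is identified with $\iota(-1)$, where $\iota \colon X \hookrightarrow Y'$. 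Thus it suffices to prove that $\iota$ is $\Tor$-vanishing, and by Lemma~\ref{lem_splitting_criterion_2} this reduces to the single verification $\partial^*(X) \subseteq Y'$.

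The main (and essentially only) piece of real work is this containment, which I expect to be a brief case analysis on the minimal generators of $X$. For a generator $z_iz_j$, dividing by either factor produces some $z_\ell \in (z_1,\ldots,z_s) \subseteq Y'$. For a generator $z_ip$ with $p$ a $3$-path of $G_{z_0,2}$, dividing by any variable that appears in $p$ yields a multiple of $z_i$, still in $(z_1,\ldots,z_s) \subseteq Y'$. The only case that requires thought is division by $z_i$, giving the monomial $p$ itself: if $p$ is already a $3$-path of $G_{z_0,3}$ then $p \in I_3(G_{z_0,3}) \subseteq Y'$, and otherwise $p$ must visit a vertex of
$$V(G_{z_0,2}) \setminus V(G_{z_0,3}) = N[v_2] \setminus N[v_1] = N(v_2) \setminus N[v_1],$$
in which case $p \in (u) \subseteq Y'$ for that vertex $u$. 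This exhausts all cases, completes the reduction, and finishes the proof.
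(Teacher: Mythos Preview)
Your proof is correct and follows essentially the same approach as the paper's: both observe that $v_2 \notin \supp X$ to compute $X \cap Y = v_2(X \cap Y') = v_2 X$, then reduce the Betti splitting claim via Lemmas~\ref{lem_splitting_criterion_1} and~\ref{lem_splitting_criterion_2} to the containment $\partial^*(X) \subseteq Y'$, whose only nontrivial case is showing that a $3$-path of $G_{z_0,2}$ not in $I_3(G_{z_0,3})$ must pass through some $u \in N(v_2)\setminus N[v_1]$. The paper is terser (it just says ``similar to the proof of Lemma~\ref{lem_splitting_level_1_1}''), while you spell out the case analysis explicitly and cite Lemma~\ref{lem_splitting_level_2} as the template instead, but the content is the same.
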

\begin{proof}
    Since $v_2 \notin \supp X$, we have 
    $$X \cap Y = v_2 \left ( X \cap (( z_1,\ldots,z_s, u \mid u \in N(v_2) \setminus N[v_1]  ) + I_3(G_{z_0,3}) ) \right ) = v_2X.$$
    With an argument similar to the proof of Lemma \ref{lem_splitting_level_1_1}, we deduce that the decomposition $(X+Y) = X + Y$ is a Betti splitting.
\end{proof}

\section{Projective dimension and regularity of 3-path ideals of graphs with at most one cycle}\label{sec_projective_dimension}
Let $G$ be a connected graph having at most one cycle $C$ on the vertices $v_1, \ldots,v_m$. With the Betti splittings established in Section \ref{sec_splittings}, one can deduce a recursive formula for computing the Betti numbers of $3$-path ideals of $G$. Nonetheless, the formula is a bit cumbersome, so we focus on deriving the formulae for the projective dimension and regularity. We then deduce Theorem \ref{thm_reg}.

We first have some preparation lemmas.
\begin{lem}\label{lem_U} Let $z_1, \ldots,z_s$ be variables of $S$ and $P$ a nonzero monomial ideal of $S$ such that $z_i \notin \supp P$ for $i = 1, \ldots,s$. Let $Q_s = (z_i z_j \mid 1 \le i < j \le s) + (z_1,\ldots,z_s) P$. Assume that $s \ge 1$. Then 
\begin{enumerate}
    \item $\pd_S( Q_s ) = \pd_S( P) + s-1$,
    \item $\reg_S( Q_s ) = \reg_S( P) + 1$.
\end{enumerate}    
\end{lem}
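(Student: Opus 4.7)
The plan is to induct on $s$. For $s = 1$ we have $Q_1 = z_1 P$, so the claim follows immediately from Lemma~\ref{lem_mul_x}. For $s \ge 2$, set $B' = (z_1, \ldots, z_{s-1}) + P$; expanding the generators one verifies directly that
$$Q_s = Q_{s-1} + z_s B'.$$
Since every minimal generator of $Q_{s-1}$ lies in $(z_1, \ldots, z_{s-1})$, we have $Q_{s-1} \subseteq B'$. Combined with the fact that $z_s$ appears in the support of neither $Q_{s-1}$ nor $B'$, a straightforward monomial argument gives the intersection $Q_{s-1} \cap z_s B' = z_s Q_{s-1}$.

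The key step is to show that $Q_s = Q_{s-1} + z_s B'$ is a Betti splitting. By Lemma~\ref{lem_splitting_criterion_1}, it suffices to check that both inclusions $z_s Q_{s-1} \hookrightarrow Q_{s-1}$ and $z_s Q_{s-1} \hookrightarrow z_s B'$ are $\Tor$-vanishing. For the second, the diagram obtained by multiplication-by-$z_s$ on both sides identifies this inclusion with $Q_{s-1} \hookrightarrow B'$ (up to a degree shift), and the latter is $\Tor$-vanishing by Lemma~\ref{lem_splitting_criterion_2}: one checks $\partial^*(Q_{s-1}) \subseteq B'$ by the routine case-check that every variable-quotient of a minimal generator $z_iz_j$ or $z_i f$ of $Q_{s-1}$ belongs to $(z_1, \ldots, z_{s-1}) + P$. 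For the first inclusion, Lemma~\ref{lem_splitting_criterion_2} cannot be applied directly: for instance $z_s z_j \in \partial^*(z_s \cdot z_iz_j)$ but $z_s z_j \notin Q_{s-1}$. Instead, we identify $z_s Q_{s-1} \cong Q_{s-1}(-1)$ via division by $z_s$, under which the inclusion becomes multiplication by $z_s \in \m$ on $Q_{s-1}$; since $\m$ annihilates $\Tor^S_i(\k, -)$ for every $i$, this map induces zero on $\Tor$. This is the main technical obstacle of the proof, as it falls outside the framework of Lemma~\ref{lem_splitting_criterion_2}.

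With the Betti splitting in hand, Lemmas~\ref{lem_pd_reg_split} and~\ref{lem_mul_x} give
$$\pd(Q_s) = \max\{\pd(Q_{s-1}) + 1,\, \pd(B')\}, \qquad \reg(Q_s) = \max\{\reg(Q_{s-1}),\, \reg(B') + 1\}.$$
Since $(z_1, \ldots, z_{s-1})$ and $P$ involve disjoint sets of variables, the standard K\"unneth-type formula for $\Tor$ of sums in disjoint variables (cf.\ \cite{NV2}) yields $\pd(B') = \pd(P) + s - 1$ and $\reg(B') = \reg(P)$. Substituting the inductive hypotheses $\pd(Q_{s-1}) = \pd(P) + s - 2$ and $\reg(Q_{s-1}) = \reg(P) + 1$, the two maxima evaluate to $\pd(P) + s - 1$ and $\reg(P) + 1$ respectively, completing the induction.
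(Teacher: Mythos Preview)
Your proof is correct and follows the same inductive decomposition as the paper: the paper writes $Q_{s+1} = Q_s + B$ with $B = z_{s+1}\bigl((z_1,\ldots,z_s)+P\bigr)$ and $Q_s \cap B = z_{s+1}Q_s$, which is exactly your $Q_s = Q_{s-1} + z_sB'$ after an index shift. If anything you are more careful than the paper, which simply invokes Lemmas~\ref{lem_splitting_criterion_1} and~\ref{lem_splitting_criterion_2} for the Betti splitting, whereas you correctly note that the inclusion $z_sQ_{s-1}\hookrightarrow Q_{s-1}$ does not literally satisfy the $\partial^*$-criterion and instead use that multiplication by $z_s\in\m$ kills $\Tor^S_i(\k,-)$---the same fact the paper uses implicitly elsewhere (e.g.\ in Lemma~\ref{lem_splitting_level_2}).
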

\begin{proof} We prove by induction on $s$. The base case $s = 1$ is clear. Assume that the statements hold for $s \ge 1$. Let $B = z_{s+1} (z_1,\ldots,z_s) + z_{s+1} P$. Then we have $Q_{s+1} = Q_s + B$ and $Q_s \cap B = z_{s+1} Q_s$. By Lemma \ref{lem_splitting_criterion_1} and Lemma \ref{lem_splitting_criterion_2}, we deduce that the decomposition $Q_{s+1} = Q_s + B$ is a Betti splitting. By induction and Lemma \ref{lem_pd_reg_split}, we deduce that 
\begin{align*}
    \pd_S ( Q_{s+1}) &= \max \{\pd_S(Q_s) + 1, \pd_S(B)\} = \pd_S(P) + s,\\
    \reg_S(Q_{s+1}) & = \max \{ \reg_S (Q_s), \reg_S(B)\} = \reg_S(P) + 1.
\end{align*}
The conclusion follows.
\end{proof}

\begin{lem}\label{lem_V} Let $z_1,\ldots,z_s$ be variables of $S$ and $P$ a non-zero monomial ideal of $S$ such that $z_i \notin \supp P$ for $i= 1,\ldots,s$. Let $Q = (z_1,\ldots,z_s) + P$. Then
\begin{enumerate}
    \item $\pd_S(Q) = \pd_S(P) + s$,
    \item $\reg_S(Q) = \reg_S(P)$.
\end{enumerate}   
\end{lem}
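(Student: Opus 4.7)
The plan is to induct on $s$, employing the Betti splitting machinery of Section \ref{sec_pre} in the same spirit as the proof of Lemma \ref{lem_U}. For the base case $s = 1$, decompose $Q = (z_1) + P$; since $z_1 \notin \supp P$, we have $(z_1) \cap P = z_1 P$. By Lemma \ref{lem_splitting_criterion_1}, showing that this is a Betti splitting reduces to verifying that both inclusion maps $z_1 P \hookrightarrow (z_1)$ and $z_1 P \hookrightarrow P$ are $\Tor$-vanishing. The second map factors as the degree-shift isomorphism $z_1 P \cong P(-1)$ followed by the multiplication-by-$z_1$ endomorphism of $P$; since multiplication by $z_1 \in \m$ induces the zero map on $\Tor_i^S(\k, -)$ for every $i$, this inclusion is $\Tor$-vanishing. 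For the first map, Lemma \ref{lem_splitting_criterion_2} does not apply, so one argues directly: the principal ideal $(z_1) \cong S(-1)$ is free over $S$, whence $\Tor_i^S(\k, (z_1)) = 0$ for $i \ge 1$, while on $\Tor_0$ the image of every generator $z_1 f$ lies in $\m(z_1)$ and hence vanishes in $(z_1)/\m(z_1)$.

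With the Betti splitting in hand, Lemma \ref{lem_pd_reg_split} combined with Lemma \ref{lem_mul_x} will yield
\begin{align*}
\pd(Q) &= \max\{\pd((z_1)), \pd(P), \pd(z_1P) + 1\} = \max\{0, \pd(P), \pd(P) + 1\} = \pd(P) + 1,\\
\reg(Q) &= \max\{\reg((z_1)), \reg(P), \reg(z_1P) - 1\} = \max\{1, \reg(P), \reg(P)\} = \reg(P),
\end{align*}
where the final equality uses $\reg(P) \ge 1$, which holds since $P$ is a nonzero proper monomial ideal (so its generators have positive degree). For the inductive step, write $Q = (z_{s+1}) + Q'$ with $Q' = (z_1,\ldots,z_s) + P$, and note that $z_{s+1} \notin \supp(Q')$. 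Applying the base case with $Q'$ in place of $P$ gives $\pd(Q) = \pd(Q') + 1$ and $\reg(Q) = \reg(Q')$; the inductive hypothesis then delivers $\pd(Q) = \pd(P) + s + 1$ and $\reg(Q) = \reg(P)$, completing the proof.

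The main obstacle is the verification of $\Tor$-vanishing for the inclusion $z_1 P \hookrightarrow (z_1)$ in the base case, because Lemma \ref{lem_splitting_criterion_2} is not applicable: $\partial^*(z_1 P)$ contains $P$, which is not contained in $(z_1)$. The observation that salvages the argument is the freeness of the principal ideal $(z_1)$ as an $S$-module, which forces its higher $\Tor$ groups with $\k$ to vanish automatically and leaves only a trivial degree-zero check. Once that point is overcome, the rest of the proof is routine bookkeeping within the framework already developed for Lemma \ref{lem_U}.
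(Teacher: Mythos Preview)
Your argument is correct. The verification that $(z_1)\cap P = z_1 P$, the two $\Tor$-vanishing checks (via multiplication by $z_1\in\m$ for the inclusion into $P$, and via freeness of the principal ideal $(z_1)$ for the inclusion into $(z_1)$), and the application of Lemma~\ref{lem_pd_reg_split} together with Lemma~\ref{lem_mul_x} all go through as written; the inductive step is immediate.

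The paper, however, proceeds quite differently. It does not set up a Betti splitting at all: it simply invokes \cite[Proposition~3.11]{NV2}, which gives the projective dimension and regularity of a sum $I+J$ of ideals supported on disjoint sets of variables in terms of those of $I$ and $J$ (essentially the tensor-product-of-resolutions formula $\pd_S(I+J)=\pd_S(I)+\pd_S(J)+1$ and $\reg_S(I+J)=\reg_S(I)+\reg_S(J)-1$). Taking $I=Z=(z_1,\dots,z_s)$ with $\pd_S(Z)=s-1$ and $\reg_S(Z)=1$ then yields the lemma in one line. Your route is more self-contained---it stays entirely within the Betti-splitting toolkit developed in Section~\ref{sec_pre} and does not require the external reference---at the cost of a longer argument and the somewhat delicate $\Tor_0$ check that you correctly flagged as the main obstacle. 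The paper's route is shorter but relies on a more powerful black box.
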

\begin{proof}
    Let $Z = (z_1,\ldots,z_s)$. The conclusion follows from \cite[Proposition 3.11]{NV2} and the facts that $\pd_S(Z) = s-1$ and $\reg_S(Z) = 1$.
\end{proof}

We recall the following notations in Section \ref{sec_pre}. Let $z_0$ be a leaf of $G$ of the highest level and $y_0$ its unique neighbor. First, assume that $\level(z_0) \ge 2$. Let $N(y_0) = \{z_0, z_1, \ldots,z_s\} \cup \{x_0\}$, where $z_1, \ldots,z_s$ are leaves of $G$ having the same level as $z_0$ and $x_0$ is the unique neighbor of $y_0$ of smaller level. We denote by $G_{z_0,1} = G \setminus \{z_0,z_1,\ldots,z_s\}$, $G_{z_0,2} = G \setminus N[y_0]$ and $G_{z_0,3} = G \setminus N[\{y_0,x_0\}]$.

\begin{thm}\label{thm_pd_1} Assume that $\level(z_0) \ge 2$. With the notations above, we have  
$$\pd (I_3(G)) = \begin{cases} \max \{ p_1, p_2 + s+1, p_3 + |N(x_0)| + s\}, & \text{ if } s > 0 \\
\max \{ p_1, p_3 + |N(x_0)|\}, & \text{ if } s = 0.    
\end{cases}$$    
where $p_1 = \pd (I_3(G_{z_0,1})), p_2 = \pd(I_3(G_{z_0,2}))$, and $p_3 = \pd (I_3(G_{z_0,3}) )$.
\end{thm}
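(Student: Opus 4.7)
The plan is to apply the Betti splitting $I_3(G) = z_0 y_0 J(z_0) + I_3(G_{z_0,1})$ from Lemma \ref{lem_splitting_1}, together with Lemma \ref{lem_pd_reg_split}, to obtain
\begin{equation*}
\pd(I_3(G)) = \max\bigl\{\pd(z_0 y_0 J(z_0)),\, p_1,\, \pd(z_0 y_0 L(z_0)) + 1\bigr\}.
\end{equation*}
Lemma \ref{lem_mul_x}(3) strips the $z_0 y_0$ factors, so it suffices to compute $\pd(J(z_0))$ and $\pd(L(z_0))$. The first is immediate: $J(z_0) = (z_1,\ldots,z_s,x_0)$ is generated by $s+1$ variables, so from its Koszul resolution $\pd(J(z_0)) = s$.

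To compute $\pd(L(z_0))$, I would split on whether $s \ge 1$ or $s = 0$. When $s \ge 1$, Lemma \ref{lem_splitting_level_2} furnishes a Betti splitting $L(z_0) = U + V$ with $U \cap V = x_0 U$. Lemma \ref{lem_U} applied with $P = I_3(G_{z_0,2})$ gives $\pd(U) = p_2 + s - 1$. For $V$, I would factor $V = x_0 W$, where $W = (z_1,\ldots,z_s,\, u \in N(x_0)\setminus\{y_0\}) + I_3(G_{z_0,3})$; here the hypothesis $\level(z_0) \ge 2$ ensures that $x_0, y_0, z_0$ all lie in a single tree $T_i$, so that $N(x_0)\setminus\{y_0\}$ is disjoint from $V(G_{z_0,3})$. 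This places $W$ in the setup of Lemma \ref{lem_V} with $s + |N(x_0)| - 1$ adjoined variables, and Lemma \ref{lem_mul_x}(3) then gives $\pd(V) = p_3 + |N(x_0)| + s - 1$. Since $\pd(U \cap V) = \pd(x_0 U) = \pd(U) = p_2 + s - 1$, Lemma \ref{lem_pd_reg_split} yields
\begin{equation*}
\pd(L(z_0)) = \max\{p_2 + s,\, p_3 + |N(x_0)| + s - 1\}.
\end{equation*}
When $s = 0$, $U = 0$ forces $L(z_0) = V$, and a direct application of Lemmas \ref{lem_V} and \ref{lem_mul_x}(3) gives $\pd(L(z_0)) = p_3 + |N(x_0)| - 1$.

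Plugging back into the original splitting and simplifying yields the claimed formulae: for $s \ge 1$, the term $s = \pd(z_0 y_0 J(z_0))$ is dominated by $p_2 + s + 1$ (using $p_2 \ge 0$), while for $s = 0$ the term $\pd(z_0 y_0 J(z_0)) = 0$ is absorbed by the other terms, provided the relevant ideals are nonzero. The main obstacle, beyond juggling the three splittings in sequence, is verifying that $V$ can be rewritten as $x_0 W$ with $W$ of the form required by Lemma \ref{lem_V}; this relies crucially on $\level(z_0) \ge 2$ via the identity $N(x_0) \setminus N[y_0] = N(x_0) \setminus \{y_0\}$, which would fail once the cycle structure intrudes, as in the parallel $\level(z_0) = 1$ case. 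A secondary concern is degenerate configurations where some $I_3(G_{z_0,i})$ vanishes, technically breaking the hypotheses of Lemmas \ref{lem_U} and \ref{lem_V}, but these are disposed of by direct Koszul-type computations.
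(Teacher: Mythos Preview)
There is a genuine gap at the very first step. Lemma \ref{lem_splitting_1} gives the decomposition $I_3(G) = z_0 y_0 J(z_0) + I_3(G_1)$ where $G_1 = G \setminus \{z_0\}$, \emph{not} $G_{z_0,1} = G \setminus \{z_0,\ldots,z_s\}$. When $s \ge 1$ these are different graphs, and in fact your claimed equality $I_3(G) = z_0 y_0 J(z_0) + I_3(G_{z_0,1})$ is false as an equality of ideals: the $3$-path $z_1 y_0 x_0$ lies in $I_3(G)$ but is not divisible by $z_0$ (so it misses $z_0 y_0 J(z_0)$) and does not involve only vertices of $G_{z_0,1}$ (so it misses $I_3(G_{z_0,1})$). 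Thus you cannot invoke Lemma \ref{lem_pd_reg_split} with $p_1$ in the middle slot after a single splitting.

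The paper closes this gap by iterating: set $G_i = G \setminus \{z_0,\ldots,z_{i-1}\}$ for $0 \le i \le s+1$ and apply Lemma \ref{lem_splitting_1} to each $G_i$ in turn, peeling off one sibling leaf at a time. Each step produces an intersection ideal $L_i$ which is an analogue of your $L(z_0)$ with $z_{i+1},\ldots,z_s$ in place of $z_1,\ldots,z_s$; your computations via Lemmas \ref{lem_U} and \ref{lem_V} go through verbatim for each $L_i$ and show $\pd(L_i)+1$ is maximized at $i=0$, where it equals your $\pd(L(z_0))+1$. Unwinding the $s+1$ splittings then replaces the unknown $\pd(I_3(G\setminus\{z_0\}))$ by $p_1 = \pd(I_3(G_{s+1}))$, yielding the stated formula. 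Your downstream calculations of $\pd(U)$, $\pd(V)$, and $\pd(L(z_0))$ are correct and coincide with the $i=0$ case of the paper's argument; only the iteration is missing.
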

\begin{proof} For each $i=0, \ldots, s+1$, let $G_i = G \setminus \{z_0, z_1, \ldots,z_{i-1}\}$. We have $G_0 = G$ and $G_{s+1} = G_{z_0,1}$. For each $i = 0, \ldots, s$, we define the following ideals 
\begin{align*}
    U_i & = (z_j z_k \mid i < j < k \le s) + (z_{i+1},\ldots,z_s) I_3(G_{z_0,2}) \\
    V_i &= x_0(z_{i+1},\ldots,z_s, u \in N(x_0) \setminus \{y_0\}) + x_0 I_3(G_{z_0,3})\\
    L_i &= U_i + V_i.
\end{align*}

    By Lemma \ref{lem_pd_reg_split} and Lemma \ref{lem_splitting_1}, we have 
\begin{equation}
    \pd (I_3(G_i)) = \max \{ s -i, \pd (I_3(G_{i+1})), \pd (L_i) + 1\}.
\end{equation}
Note that $U_s =(0)$. By Lemma \ref{lem_pd_reg_split} and Lemma \ref{lem_splitting_level_2}, we have 
\begin{equation}
    \pd(L_i) = \begin{cases}
        \max \{ \pd(U_i) + 1, \pd (V_i) \}, & \text{ if } i < s,\\
        \pd (V_i), & \text{ if } i = s.
    \end{cases}
\end{equation}

By Lemma \ref{lem_U}, $\pd(U_i) = \pd (I_3(G_{z_0,2})) + s-i-1$ for $i < s$. By Lemma \ref{lem_V}, $\pd(V_i) = \pd (I_3(G_{z_0,3})) + |N(x_0)| + s-i-1$. Hence, 
\begin{equation}
    \pd(L_i) = \begin{cases}
        \max \{ p_2 + s-i, p_3 + |N(x_0)| + s -i -1 \}, & \text{ if } i < s,\\
        p_3 + |N(x_0)| - 1, & \text{ if } i = s.
    \end{cases}
\end{equation}
The conclusion follows.
\end{proof}

\begin{thm}\label{thm_reg_1} Assume that $\level(z_0) \ge 2$. With the notations above, we have  
$$\reg (I_3(G)) = \begin{cases}
    \max \{ \reg (I_3(G_{z_0,1})), \reg (I_3(G_{z_0,2}) ) +2\} & \text{ if } s \ge 1,\\
    \max \{ \reg (I_3(G_{z_0,1})), \reg (I_3(G_{z_0,3}) ) +2\} & \text{ if } s = 0.
\end{cases}$$
\end{thm}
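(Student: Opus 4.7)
The plan is to mirror the proof of Theorem \ref{thm_pd_1}: iterate the leaf splitting of Lemma \ref{lem_splitting_1} over the sibling leaves $z_0, z_1, \ldots, z_s$ of $y_0$. For $i = 0, \ldots, s+1$, set $G_i = G \setminus \{z_0, \ldots, z_{i-1}\}$, so $G_0 = G$ and $G_{s+1} = G_{z_0,1}$. In each $G_i$ the vertex $z_i$ remains a leaf with unique neighbour $y_0$, whose remaining neighbours are $\{z_{i+1}, \ldots, z_s, x_0\}$. Applying Lemma \ref{lem_splitting_1} to $(G_i, z_i)$ produces a Betti splitting
\[
    I_3(G_i) = z_i y_0 J(z_i) + I_3(G_{i+1})
\]
whose intersection is $z_i y_0 L_i$ with $L_i = U_i + V_i$ of the shape described in \eqref{eq_level_2_0}, now using the remaining sibling set $\{z_{i+1}, \ldots, z_s\}$.

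The first step is to combine Lemma \ref{lem_pd_reg_split} with Lemma \ref{lem_mul_x} to obtain the recursion
\[
    \reg(I_3(G_i)) = \max\bigl\{\reg(I_3(G_{i+1})),\; \reg(L_i) + 1\bigr\},
\]
using that $\reg(z_i y_0 J(z_i)) = 3$ is absorbed by $\reg(I_3(G_{i+1})) \ge 3$ (the latter holds whenever $I_3(G_{i+1}) \ne 0$, since $I_3$ is generated in degree three). Next, for $i < s$ I would apply Lemma \ref{lem_splitting_level_2} together with Lemma \ref{lem_mul_x}(4) to conclude $\reg(L_i) = \max\{\reg(U_i), \reg(V_i)\}$, and then use Lemma \ref{lem_U} and Lemma \ref{lem_V} (the latter combined with Lemma \ref{lem_mul_x}(4) for the external factor $x_0$) to evaluate
\[
    \reg(U_i) = \reg(I_3(G_{z_0,2})) + 1, \qquad \reg(V_i) = \reg(I_3(G_{z_0,3})) + 1.
\]
Since $G_{z_0,3}$ is an induced subgraph of $G_{z_0,2}$, Lemma \ref{lem_induced_sub} gives $\reg(I_3(G_{z_0,3})) \le \reg(I_3(G_{z_0,2}))$, so $\reg(L_i) = \reg(I_3(G_{z_0,2})) + 1$ for all $i < s$. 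At the terminal step $i = s$, $U_s = 0$ and $L_s = V_s$, yielding $\reg(L_s) = \reg(I_3(G_{z_0,3})) + 1$.

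Unrolling the recursion from $i = 0$ up to $i = s$ produces, when $s \ge 1$,
\[
    \reg(I_3(G)) = \max\bigl\{\reg(I_3(G_{z_0,1})),\; \reg(I_3(G_{z_0,2})) + 2,\; \reg(I_3(G_{z_0,3})) + 2\bigr\},
\]
which collapses to the stated formula by the induced-subgraph comparison above. When $s = 0$ no iteration is needed: Lemma \ref{lem_splitting_1} applied directly to $(G, z_0)$ has $L_0 = V_0$, and the same computation of $\reg(V_0)$ yields the second case of the theorem.

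The main obstacle is the bookkeeping: one has to verify that the auxiliary graphs $(G_i)_{z_j}$ and $(G_i)_{x_0}$ arising inside $L_i$ are independent of $i$ and coincide with $G_{z_0,2}$ and $G_{z_0,3}$ respectively, so that Lemmas \ref{lem_U} and \ref{lem_V} are being applied to a single fixed datum at every step. Once that is recorded — together with the harmless handling of degenerate cases where $I_3(G_{z_0,2})$ or $I_3(G_{z_0,3})$ vanishes (in which the corresponding $U_i$ or $V_i$ reduces to a quadric or linear piece and can be checked directly) — the regularity computation is a clean assembly of the splitting lemmas from Sections \ref{sec_pre} and \ref{sec_splittings}.
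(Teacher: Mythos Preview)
Your proposal is correct and follows essentially the same route as the paper's own proof: iterate the leaf splitting of Lemma~\ref{lem_splitting_1} along $z_0,\ldots,z_s$, use Lemma~\ref{lem_splitting_level_2} to decompose each $L_i$ as $U_i+V_i$, evaluate $\reg(U_i)$ and $\reg(V_i)$ via Lemmas~\ref{lem_U} and~\ref{lem_V}, and collapse the resulting maximum using the induced-subgraph inequality $\reg(I_3(G_{z_0,3}))\le\reg(I_3(G_{z_0,2}))$. If anything, you are slightly more careful than the paper in explicitly justifying the absorption of the $\reg(z_iy_0 J(z_i))=3$ term and in flagging the degenerate cases where $I_3(G_{z_0,2})$ or $I_3(G_{z_0,3})$ may vanish.
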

\begin{proof} We use the notations as in the proof of Theorem \ref{thm_pd_1}.  By Lemma \ref{lem_pd_reg_split} and Lemma \ref{lem_splitting_1}, we have 
\begin{equation}
    \reg (I_3(G_i)) = \max \{ \reg (I_3(G_{i+1})), \reg (L_i) + 1\}.
\end{equation}
By Lemma \ref{lem_pd_reg_split} and Lemma \ref{lem_splitting_level_2}, we have 
\begin{equation}
    \reg(L_i) = \max \{ \reg(U_i), \reg (V_i) \}.
\end{equation}
Note that $U_s$ is the zero ideal. By Lemma \ref{lem_U}, $\reg(U_i) = \reg (I_3(G_{z_0,2})) + 1$ if $0 \le i < s$. By Lemma \ref{lem_V}, $\reg(V_i) = \reg (I_3(G_{z_0,3})) + 1$ for all $i = 0, \ldots,s$. Since $G_{z_0,3}$ is an induced subgraph of $G_{z_0,2}$, we have that $\reg (I_3(G_{z_0,3})) \le \reg (I_3(G_{z_0,2}))$. The conclusion follows.
\end{proof}

Now, assume that $\level(z_0) = 1$ and $v_1$ is the unique neighbor of $z_0$. Assume that $N(v_1) = \{z_0,z_1,\ldots,z_s\} \cup \{v_2,v_m\}$. We denote by $G_{z_0,1} = G\setminus \{z_0,z_1,\ldots,z_s\}$, $G_{z_0,2} = G \setminus N[v_1]$, $G_{z_0,3} = G \setminus N[\{v_1,v_2\}]$, and $G_{z_0,4} = G \setminus N[\{v_1,v_m\}]$. 

\begin{thm}\label{thm_pd_2} Assume that $\level(z_0) = 1$. With the notations above, we have  
$$\pd (I_3(G)) = \begin{cases} \max \{ p_1, p_2 + s + 1, p_3 + n_2 + s + 1,p_4 + n_m + s + 1\}, & \text{ if } s > 0 \\
\max \{ p_1, p_3 + n_2 + 1, p_4 + n_m + 1\}, & \text{ if } s = 0.    
\end{cases}$$ 
where $p_1 = \pd(I_3(G_{z_0,1}) ), p_2 = \pd(I_3(G_{z_0,2}))$, $p_3 = \pd (I_3(G_{z_0,3}))$, $p_4 = \pd (I_3(G_{z_0,4}))$, and $n_2 = |N(v_2) \setminus N[v_1]|$ and $n_m = |N(v_m) \setminus N[v_1]|$.
\end{thm}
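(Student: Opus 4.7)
The plan is to mirror the proof of Theorem \ref{thm_pd_1}, iterating Lemma \ref{lem_splitting_1} to peel off the level 1 leaves $z_0, z_1, \ldots, z_s$ one at a time. For each $i = 0, \ldots, s+1$, set $G_i = G \setminus \{z_0, \ldots, z_{i-1}\}$, so that $G_0 = G$ and $G_{s+1} = G_{z_0,1}$. For $i = 0, \ldots, s$, applying Lemma \ref{lem_splitting_1} with the leaf $z_i$, whose unique neighbor is $v_1$, produces the Betti splitting $I_3(G_i) = z_i v_1 J(z_i) + I_3(G_{i+1})$ whose intersection has the form $z_i v_1 L_i$, where $L_i = X_i + Y_i + Z_i$ is the indexed analogue of the ideal $L(z_0)$ from Section \ref{sec_splittings}. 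By Lemma \ref{lem_pd_reg_split} this yields the recursion
\begin{equation*}
\pd(I_3(G_i)) = \max\{\pd(J(z_i)), \pd(I_3(G_{i+1})), \pd(L_i) + 1\}.
\end{equation*}

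The key input is $\pd(L_i)$, which I will compute via the nested Betti splittings from Lemmas \ref{lem_splitting_level_1_1} and \ref{lem_splitting_level_1_2}. Lemma \ref{lem_splitting_level_1_1} gives $\pd(L_i) = \max\{\pd(X_i + Y_i) + 1, \pd(Z_i)\}$ (using Lemma \ref{lem_mul_x} to pull off the factor $v_m$ in the intersection), and for $i < s$, Lemma \ref{lem_splitting_level_1_2} gives $\pd(X_i + Y_i) = \max\{\pd(X_i) + 1, \pd(Y_i)\}$; when $i = s$, one has $X_s = 0$ and so $X_s + Y_s = Y_s$. Lemma \ref{lem_U} then computes $\pd(X_i)$ in terms of $p_2$ and $s - i$, and Lemma \ref{lem_V}, applied after factoring out the leading variables $v_2$ and $v_m$, computes $\pd(Y_i)$ in terms of $p_3, n_2, s - i$ and $\pd(Z_i)$ in terms of $p_4, n_m, s - i$.

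With all building blocks in place, I assemble the conclusion by backward induction on $i$, starting from $\pd(I_3(G_{s+1})) = p_1$ and running down to $i = 0$. At each step the contribution $\pd(J(z_i))$ gets absorbed by the dominant terms coming from $\pd(L_i) + 1$, so the four candidates that survive in the final maximum are $p_1$, $p_2 + s + 1$, $p_3 + n_2 + s + 1$, and $p_4 + n_m + s + 1$. The case $s = 0$ is treated separately because then $X_0 = 0$, so the $p_2$-candidate drops out and only the three remaining terms appear, matching the stated formula.

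The principal obstacle is the careful bookkeeping required by the triple Betti splitting of $L_i$, especially keeping track of how the multipliers $v_2, v_m$ interact with the coefficient ideals and how the count of linear generators inside $Y_i$ and $Z_i$ shifts with $i$. The genuinely new feature relative to Theorem \ref{thm_pd_1} is the appearance of the $Z_i$ ideal coming from the second cycle-neighbor $v_m$, which is what produces the extra family $p_4 + n_m + s + 1$ in the final expression.
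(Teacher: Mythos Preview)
Your proposal is correct and follows essentially the same approach as the paper's proof: the paper likewise sets $G_i = G \setminus \{z_0,\ldots,z_{i-1}\}$, defines the indexed ideals $X_i,Y_i,Z_i,L_i$, applies Lemma~\ref{lem_splitting_1} to obtain the recursion $\pd(I_3(G_i)) = \max\{s-i,\pd(I_3(G_{i+1})),\pd(L_i)+1\}$, then uses Lemmas~\ref{lem_splitting_level_1_1} and~\ref{lem_splitting_level_1_2} to split $L_i$ and Lemmas~\ref{lem_U},~\ref{lem_V} to evaluate $\pd(X_i),\pd(Y_i),\pd(Z_i)$, exactly as you outline. The only cosmetic difference is that the paper records $\pd(J(z_i))$ directly as the integer $s-i$ rather than naming it, but as you note this term is absorbed in either case.
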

\begin{proof} For each $i = 0, \ldots,s+1$, let $G_i = G \setminus \{z_0, z_1,\ldots,z_{i-1}\}$. We have $G_0 = G$ and $G_{s+1} = G_{z_0,1}$. For each $i = 0, \ldots,s$, we define the following ideals 
\begin{align*}
    X_i & = (z_j z_k \mid i < j < k \le s) + (z_{i+1},\ldots,z_s) I_3(G_{z_0,2}) \\
    Y_i &= v_2(z_{i+1},\ldots,z_s, u \in N(v_2) \setminus N[v_1] ) + v_2 I_3(G_{z_0,3})\\
    Z_i &= v_m(z_{i+1},\ldots,z_s, v_2,u \in N(v_m) \setminus N[v_1] ) + v_m I_3(G_{z_0,4})\\
    L_i &= X_i + Y_i + Z_i.
\end{align*}
By Lemma \ref{lem_pd_reg_split} and Lemma \ref{lem_splitting_1}, we have 
$$\pd_S ( I_3(G_i) )  = \max \{ s - i, \pd_S (I_3(G_{i+1} )), \pd_S( L_i) + 1 \}.$$
By Lemma \ref{lem_pd_reg_split}, Lemma \ref{lem_splitting_level_1_1} and Lemma \ref{lem_splitting_level_1_2}, we have 
\begin{align*}
    \pd_S( L_i)  &= \max \{ \pd (X_i + Y_i) + 1, \pd (Z_i)\} \\
    &= \begin{cases} 
\max \{ \pd(X_i) + 2, \pd (Y_i) + 1, \pd (Z_i) \} & \text{ if } i < s,\\
\max \{ \pd (Y_i) + 1, \pd (Z_i) \} & \text{ if } i = s.\end{cases}
\end{align*}
Note that $X_s = (0)$. By Lemma \ref{lem_U} and Lemma \ref{lem_V}, we deduce that 
\begin{align*}
    \pd (X_i) & = \pd_S (I_3(G_{z_0,2})) +s - i -1 \text { if } i < s,\\
    \pd(Y_i) &= \pd (I_3(G_{z_0,3}) )+ |N(v_2) \setminus N[v_1] | + s - i -1,\\
    \pd (Z_i) &= \pd_S(I_3(G_{z_0,4})) + |N(v_m) \setminus N[v_1] | + s-i.
\end{align*}
The conclusion follows.  
\end{proof}

\begin{thm}\label{thm_reg_2} Assume that $\level(z_0) = 1$. Then 
$$\reg (I_3(G)) = \begin{cases}
    \max \{ \reg (I_3(G_{z_0,1})), \reg (I_3(G_{z_0,2}) ) +2\} & \text{ if } s \ge 1,\\
    \max \{ \reg (I_3(G_{z_0,1})), \reg (I_3(G_{z_0,3}) ) +2, \reg (I_3(G_{z_0,4})) + 2\} & \text{ if } s = 0.
\end{cases}$$
\end{thm}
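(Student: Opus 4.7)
My plan is to follow the template of Theorem \ref{thm_reg_1}, reusing the graphs $G_i = G \setminus \{z_0, \ldots, z_{i-1}\}$ (so $G_0 = G$ and $G_{s+1} = G_{z_0,1}$) and the ideals $X_i, Y_i, Z_i, L_i$ constructed in the proof of Theorem \ref{thm_pd_2}. The argument peels off the leaves $z_0, z_1, \ldots, z_s$ one at a time, all sharing the common neighbor $v_1$.

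First, applying Lemma \ref{lem_splitting_1} to the leaf $z_i$ of $G_i$ gives the Betti splitting $I_3(G_i) = z_i v_1 J(z_i) + I_3(G_{i+1})$ with intersection $z_i v_1 L_i$. Lemma \ref{lem_pd_reg_split}, together with Lemma \ref{lem_mul_x} applied to the linear ideal $J(z_i)$ (so $\reg(z_i v_1 J(z_i)) = 3$), yields
$$\reg(I_3(G_i)) = \max\bigl\{3,\; \reg(I_3(G_{i+1})),\; \reg(L_i) + 1\bigr\}.$$
Since $v_m v_2$ is always a minimal generator of $Z_i \subseteq L_i$, one has $\reg(L_i) \ge 2$, so the constant $3$ is dominated and the recursion simplifies to $\reg(I_3(G_i)) = \max\{\reg(I_3(G_{i+1})), \reg(L_i) + 1\}$.

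Next I compute $\reg(L_i)$ by iterating Lemma \ref{lem_splitting_level_1_1} and Lemma \ref{lem_splitting_level_1_2}, whose Tor-vanishing proofs transfer verbatim to the shifted $L_i$ since $v_2 \notin \supp(X_i)$ and $v_m \notin \supp(X_i + Y_i)$. This produces $\reg(L_i) = \max\{\reg(X_i), \reg(Y_i), \reg(Z_i)\}$ for $i < s$, and $\reg(L_s) = \max\{\reg(Y_s), \reg(Z_s)\}$ since $X_s = (0)$. Lemma \ref{lem_U} then gives $\reg(X_i) = \reg(I_3(G_{z_0,2})) + 1$ for $i < s$, while Lemma \ref{lem_mul_x} combined with Lemma \ref{lem_V} gives $\reg(Y_i) = \reg(I_3(G_{z_0,3})) + 1$ and $\reg(Z_i) = \reg(I_3(G_{z_0,4})) + 1$.

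Finally, because $G_{z_0,3}$ and $G_{z_0,4}$ are induced subgraphs of $G_{z_0,2}$, Lemma \ref{lem_induced_sub} yields $\reg(I_3(G_{z_0,3})), \reg(I_3(G_{z_0,4})) \le \reg(I_3(G_{z_0,2}))$. Hence for $s \ge 1$ we have $\reg(L_i) + 1 = \reg(I_3(G_{z_0,2})) + 2$ for every $i < s$ and $\reg(L_s) + 1 \le \reg(I_3(G_{z_0,2})) + 2$, and unwinding the recursion gives the first branch of the theorem. For $s = 0$, only the index $i = 0 = s$ occurs and $\reg(L_0) + 1 = \max\{\reg(I_3(G_{z_0,3})) + 2, \reg(I_3(G_{z_0,4})) + 2\}$, giving the second branch. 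The main obstacle will be bookkeeping around degenerate cases (e.g., when $X_s = (0)$, or one of $N(v_2) \setminus N[v_1]$ or $N(v_m) \setminus N[v_1]$ is empty), but these collapse harmlessly into the max on account of the induced-subgraph bound on $\reg(I_3(G_{z_0,2}))$.
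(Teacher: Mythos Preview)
Your proposal is correct and follows essentially the same approach as the paper: you use the same graphs $G_i$ and ideals $X_i, Y_i, Z_i, L_i$ from the proof of Theorem \ref{thm_pd_2}, derive the same recursion $\reg(I_3(G_i)) = \max\{\reg(I_3(G_{i+1})), \reg(L_i)+1\}$ via Lemmas \ref{lem_splitting_1} and \ref{lem_pd_reg_split}, compute $\reg(L_i)$ through the Betti splittings of Lemmas \ref{lem_splitting_level_1_1} and \ref{lem_splitting_level_1_2}, evaluate the pieces with Lemmas \ref{lem_U} and \ref{lem_V}, and finish with the induced-subgraph comparison of $G_{z_0,3}, G_{z_0,4}$ inside $G_{z_0,2}$. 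Your extra care in explicitly dominating the constant term $\reg(z_i v_1 J(z_i)) = 3$ and flagging the degenerate cases is slightly more detailed than the paper's presentation but does not change the argument.
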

\begin{proof} We use the notations as in the proof of Theorem \ref{thm_pd_2}. By Lemma \ref{lem_pd_reg_split} and Lemma \ref{lem_splitting_1}, we have 
$$\reg ( I_3(G_i) )  = \max \{ \reg (I_3(G_{i+1} )), \reg ( L_i) + 1 \}.$$
By Lemma \ref{lem_pd_reg_split}, Lemma \ref{lem_splitting_level_1_1}, and Lemma \ref{lem_splitting_level_1_2}, we have 

\begin{align*}
    \reg ( L_i)  &= \max \{ \reg (X_i + Y_i), \reg (Z_i)\} \\
    &= \begin{cases} 
\max \{ \reg(X_i), \reg (Y_i), \reg (Z_i) \} & \text{ if } i < s,\\
\max \{ \reg (Y_i), \reg (Z_i) \} & \text{ if } i = s.\end{cases}
\end{align*}
By Lemma \ref{lem_U}, we have $\reg (X_i) = 1 + \reg (I_3(G_{z_0,2}))$ if $i < s$. By Lemma \ref{lem_V}, $\reg (Y_i) = 1 + \reg (I_3(G_{z_0,3}))$ and $\reg (Z_i) = 1 + \reg (I_3(G_{z_0,4}))$. Since $G_{z_0,3}$ and $G_{z_0,4}$ are induced subgraphs of $G_{z_0,2}$, the conclusion follows.
\end{proof}

We now give an example to illustrate our results.

\begin{exm}\label{ex1} Let $G$ be a graph on the vertex set $V(G) = \{v_1,\ldots,v_6\} \cup \{x_1,\ldots,x_6\} \cup \{y_1,\ldots,y_6\} \cup \{z_1,\ldots,z_5\}$ such that its edge ideal is 
\begin{align*}
    I(G) &= (v_1v_2,\ldots,v_5v_6,v_1v_6) + v_1 (x_1,\ldots,x_4) + v_2(x_5,x_6) \\
    &+ x_1(y_1,\ldots,y_4) + x_5(y_5,y_6) + y_1(z_1,\ldots,z_4) + y_6(z_5).
\end{align*} 
Then $z_1$ is a leaf of the highest level of $G$, and we have $G_{z_1,1},G_{z_1,2}$, and $G_{z_1,3}$ are graphs whose edge ideals are 
\begin{align*}
    I(G_{z_1,1}) &= (v_1v_2,\ldots,v_5v_6,v_1v_6) + v_1 (x_1,\ldots,x_4) + v_2(x_5,x_6) \\
    &+ x_1(y_1,\ldots,y_4) + x_5(y_5,y_6) + y_6(z_5),\\
    I(G_{z_1,2}) &= (v_1v_2,\ldots,v_5v_6,v_1v_6) + v_1 (x_2,\ldots,x_4) + v_2(x_5,x_6) \\
    &+ x_5(y_5,y_6) + y_6(z_5),\\
    I(G_{z_1,3}) &= (v_2v_3,\ldots,v_5v_6) + v_2(x_5,x_6) + x_5(y_5,y_6) + y_6(z_5).
\end{align*} 
Note that $G_{z_1,3}$ is a tree and one can check easily that $\pd(I(G_{z_1,3})) = 4$. By Theorem \ref{thm_pd_1} and Theorem \ref{thm_reg_1}, we deduce that 
\begin{align*}
    \pd(I_3(G)) &= \max \{ \pd (I_3(G_{z_1,1}) ), \pd (I_3(G_{z_1,2})) + 4, \pd (I_3(G_{z_1,3})) + 8\},\\
    \reg (I_3(G)) &= \max \{ \reg (I_3(G_{z_1,1})), \reg (I_3(G_{z_1,2})) + 2\}.
\end{align*}
Let $H = G_{z_1,1}$ for simplicity of notations. Then $z_5$ is a leaf of the highest level in $H$ and we have 
\begin{align*}
    I(H_{z_5,1}) &= (v_1v_2,\ldots,v_5v_6,v_1v_6) + v_1 (x_1,\ldots,x_4) + v_2(x_5,x_6) \\
    &+ x_1(y_1,\ldots,y_4) + x_5(y_5,y_6),\\
    I(H_{z_5,3}) &= (v_3v_4,\ldots,v_5v_6,v_1v_6) + v_1(x_1,\ldots,x_4) + x_1(y_1,\ldots,y_4).
\end{align*} 
By Theorem \ref{thm_pd_1} and Theorem \ref{thm_reg_1}, we deduce that 
\begin{align*}
    \pd(I_3(H)) &= \max \{ \pd (I_3(H_{z_5,1}) ), \pd (I_3(H_{z_5,3})) + 3\},\\
    \reg (I_3(H)) &= \max \{ \reg (I_3(H_{z_5,1})), \reg (I_3(H_{z_5,3})) + 2\}.
\end{align*}
The graphs $H_{z_5,1}$ and $H_{z_5,3}$ are relatively small, and we can compute the projective dimension and regularity of their $3$-path ideals using Macaulay2 \cite{M2}. We then deduce that $\pd(I_3(H)) = 11$ and $\reg (I_3(H)) = 7$. We can also apply Theorem \ref{thm_pd_1} and Theorem \ref{thm_reg_1} or compute the projective dimension and regularity of the $3$-path ideal of $G_{z_1,2}$ and get $\pd (I_3(G_{z_1,2}) ) = 6$ and $\reg (I_3(G_{z_1,2})) = 7$. Hence, $\pd (I_3(G)) = 12$ and $\reg (I_3(G)) = 9$.
\end{exm}

\begin{rem} \begin{enumerate}
    \item Our method applies to study the projective dimension of edge ideal of unicyclic graphs and provides an alternative argument for the result of Alilooee, Beyarslan, and Selvaraja \cite{ABS}. We leave it as an exercise for interested readers.
    \item The formula for the projective dimension of $I_3(G)$ in Theorem \ref{thm_pd_1} and Theorem \ref{thm_pd_2} is more complicated than the formula for the regularity of $I_3(G)$ in Theorem \ref{thm_reg_1} and Theorem \ref{thm_reg_2}. As in Example \ref{ex1}, the term $p_3 + |N(x_0)| + s$ in Theorem \ref{thm_pd_1} cannot be dropped.
\end{enumerate}    
\end{rem}
We are now ready for the proof of Theorem \ref{thm_reg}.
\begin{proof}[Proof of Theorem \ref{thm_reg}]
    We prove by induction on $|G \setminus C_m|$. If $G = C_m$ the conclusion follows from \cite{AF1}. Thus, we may assume that there exists a leaf $z_0 \notin \{v_1,\ldots,v_m\}$ of some tree $T_i$ attached to $v_i$. We do not assume that $m>1$, i.e., $G$ might be a tree. We choose $z_0$ of the highest level in $T_i$. First, assume that $\level(z_0) \ge 2$. By Theorem \ref{thm_reg_1}, we have 
\begin{equation}\label{eq_4_1}
    \reg (I_3(G)) = \begin{cases}
    \max \{ \reg (I_3(G_{z_0,1})), \reg (I_3(G_{z_0,2}) ) +2\} & \text{ if } s \ge 1,\\
    \max \{ \reg (I_3(G_{z_0,1})), \reg (I_3(G_{z_0,3}) ) +2\} & \text{ if } s = 0.
\end{cases}
\end{equation}

If $\level(z_0) = 1$, by Theorem \ref{thm_reg_2}, we have 
\begin{equation}\label{eq_4_2}
    \reg (I_3(G)) = \begin{cases}
    \max \{ \reg (I_3(G_{z_0,1})), \reg (I_3(G_{z_0,2}) ) +2\} & \text{ if } s \ge 1,\\
    \max \{ \reg (I_3(G_{z_0,1})), \reg (I_3(G_{z_0,3}) ) +2, \reg (I_3(G_{z_0,4})) + 2\} & \text{ if } s = 0.
\end{cases}
\end{equation}

For ease of reading, we divide the proof into several steps. The first step is the result of Kumar and Sarkar \cite{KS}. We give a different proof here to illustrate our method.

\vspace{2mm}
\noindent \textbf{Step 1.} Assume that $G$ is a tree. Then $\reg (I_3(G)) = 2 \nu_3(G) + 1$. 

If $I_3(G) = 0$ then $\nu_3(G) = 0$ and the conclusion is vacuous. Now assume that $I_3(G)$ is nonzero. We can always choose a vertex $v$ acting as the root of $G$ so that $\level(z_0) \ge 2$. The conclusion then follows from induction, Eq. \eqref{eq_4_1}, and Lemma \ref{lem_mat1}.

\vspace{2mm}
\noindent \textbf{Step 2.} $\reg (I_3(G)) \le 2 \nu_3(G) + (\reg (I_3(C_m)) - 2\nu_3(C_m))$. 

Since the graphs $G_{z_0,j}$ for $j = 1,2,3,4$ are induced subgraphs of $G$, by induction and Step 1, we deduce that $\reg (I_3(G_{z_0,j})) \le 2 \nu_3(G) + (\reg (I_3(C_m)) - 2 \nu_3(C_m))$. If $\level(z_0) \ge 2$, the conclusion follows from Eq. \eqref{eq_4_1} and Lemma \ref{lem_mat1}. If $\level(z_0) =1$, the conclusion follows from Eq. \eqref{eq_4_2} and Lemma \ref{lem_mat2}.

\vspace{2mm}
\noindent \textbf{Step 3.} Assume that $G$ is unicyclic and $m = 3$ or $m \equiv 0,1 \pmod 4$. Then $\reg (I_3(G)) = 2 \nu_3(G) + 1$. 

Indeed, the conclusion follows from Lemma \ref{lem_induced_sub}, Step 2, and \cite[Corollary 5.5]{AF1}.

\vspace{2mm}
\noindent \textbf{Step 4.} Assume that $G$ is unicyclic with the unique cycle $C$ on $m$ vertices where $m > 3$. Assume further that $m \equiv 2,3 \pmod 4$ and $G$ is $3$-proximal. Then $\reg (I_3(G)) = 2 \nu_3(G) + \reg(I_3(C)) - 2 \nu_3(C)$. 

Indeed, we have $\Gamma_G(C)$ is the disjoint union of $C$ and a forest $F$. By Step 1, we deduce that $\reg (I_3(\Gamma_G(C))) = \reg (I_3(C))  + 2 \nu_3(F) + 1$. By Lemma \ref{lem_ind_mat_properties}, Lemma \ref{lem_induced_sub}, and Step 2, we have 
\begin{equation*}
    \begin{split}
        2 \nu_3(G) &+ (\reg (I_3(C_m)) - 2\nu_3(C_m)) = \reg (I_3(\Gamma_G(C))) \le \reg (I_3(G)) \\
        &\le 2 \nu_3(G) + (\reg (I_3(C_m)) - 2\nu_3(C_m)).
    \end{split}
\end{equation*} 
The conclusion follows.

\vspace{2mm}
\noindent \textbf{Step 5.} Assume that $G$ is unicyclic with the unique cycle $C$ on $m$ vertices where $m > 3$. Assume further that $m \equiv 2,3 \pmod 4$ and $G$ is not $3$-proximal. Then $\reg (I_3(G)) = 2 \nu_3(G) + 1$. 

By Lemma \ref{lem_induced_sub}, it suffices to prove that $\reg (I_3(G)) \le 2 \nu_3(G) + 1$ by induction on $|G \setminus C_m|$. First, assume that $\level(z_0) \ge 2$. The conclusion follows from induction, Eq. \eqref{eq_4_1}, and Lemma \ref{lem_3_mat_proximal_1}. Now, assume that $\level(z_0) = 1$. The conclusion follows from induction, Eq. \eqref{eq_4_2}, Step 1, and Lemma \ref{lem_3_mat_proximal_2}.

\vspace{2mm}
\noindent \textbf{Step 6.} Conclusion step. By Step 1 and Step 3, we may assume that $m > 3$ and $m \equiv 2,3 \pmod 4$. By Step 5, we may assume that $G$ is $3$-proximal. The conclusion then follows from Step 4 and \cite[Corollary 5.5]{AF1}.
\end{proof}

The following conjecture is a natural generalization of Theorem \ref{thm_reg} for $t$-path ideals.

\begin{conj}\label{conj_t}
    Assume that $G$ is a connected simple graph having at most one cycle $C$ of length $m$. Let $t \ge 2$ be a natural number. Then 
    $$\reg (S/I_t(G)) = \begin{cases} (t-1) \nu_t (G) + \left ( \reg (S/I_t(C)) - (t-1) \nu_3(C) \right ), & \text{ if } G \text { is } t\text{-proximal},  \\
(t-1) \nu_t(G), & \text{ otherwise}.    
\end{cases}$$
In particular, $\reg (S/I_t(G)) = (t-1) \nu_t(G)$ when $G$ is a tree.
\end{conj}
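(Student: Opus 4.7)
The plan is to extend the six-step strategy used for Theorem \ref{thm_reg} from $t=3$ to general $t \ge 2$. The four inputs I would need to generalize are (i) the recursive formulas for $\nu_t(G)$ in Lemmas \ref{lem_mat1}, \ref{lem_mat2}; (ii) the Betti splittings of Section \ref{sec_splittings}; (iii) the recursions for $\reg(I_t(G))$ in Theorems \ref{thm_reg_1}, \ref{thm_reg_2}; and (iv) the closed-form expression for $\reg(S/I_t(C_m))$ due to Alilooee--Faridi.

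First I would generalize the matching recursion. Choose a leaf $z_0$ of highest level in a tree $T_i$ attached to $v_i$, with unique neighbor $y_0$. Any maximum $t$-path induced matching either avoids $z_0$, in which case it lies in $G \setminus \{z_0, \ldots, z_s\}$ for the leaves $z_j$ sharing $y_0$; or it contains $z_0$, in which case the unique $t$-path through $z_0$ ascends from $z_0$ through $y_0$ (or wraps around the cycle, when $\level(z_0)$ is small). Enumerating these cases by the ascending $t$-path $P$ beginning at $z_0$ yields $\nu_t(G) = \max\{\nu_t(G_{z_0,1}),\, 1 + \max_P \nu_t(G \setminus N[P])\}$.

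Next I would prove the corresponding Betti splittings. The natural analogue of Lemma \ref{lem_splitting_a_leaf} is
\[ I_t(G) = z_0 \cdot K(z_0) + I_t(G \setminus z_0),\]
where $K(z_0) = \bigl( x_{i_2} \cdots x_{i_t} \mid z_0 y_0 x_{i_2} \cdots x_{i_t} \text{ is a } t\text{-path of } G\bigr)$ encodes the $(t-1)$-paths in $G \setminus z_0$ starting at $y_0$. The intersection ideal is computed by generalizing Lemma \ref{lem_colon_a_vertex}: $I_t(G \setminus z_0) : y_0$ decomposes as $(t-1)$-paths through $y_0$, together with $I_t(G \setminus N[y_0])$ and mixed terms. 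Verifying that the resulting decompositions are Betti splittings reduces, via Lemma \ref{lem_splitting_criterion_2}, to the syntactic containment $\partial^*(\cdot) \subseteq \cdot$, to be checked summand by summand as in Lemmas \ref{lem_splitting_level_2}, \ref{lem_splitting_level_1_1}, \ref{lem_splitting_level_1_2}. The payoff is a recursion $\reg(I_t(G)) = \max\{\reg(I_t(G'_1)), \reg(I_t(G'_j)) + (t-1)\}$ mirroring Theorems \ref{thm_reg_1}, \ref{thm_reg_2}. The main induction on $|G \setminus C|$ then follows the template of Theorem \ref{thm_reg}: the tree case gives $\reg(I_t(G)) = (t-1)\nu_t(G)+1$; the upper bound $\reg(I_t(G)) \le (t-1)\nu_t(G) + (\reg(I_t(C_m)) - (t-1)\nu_t(C_m))$ follows by induction and the recursion; the $t$-proximal lower bound is witnessed by $\Gamma_G(C) = C \sqcup F$ with $\reg(I_t(\Gamma_G(C))) = \reg(I_t(C)) + (t-1)\nu_t(F)+1$; and the non-proximal case is handled by adapting Lemmas \ref{lem_3_mat_proximal_1}, \ref{lem_3_mat_proximal_2} to force a strict drop in $\nu_t$ along at least one branch of the recursion.

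The hard part will be establishing the Betti splittings. For $t=3$, the ideal $K(z_0)$ is generated by linear forms attached to $y_0$ and $x_0$, so $\partial^*$ is combinatorially transparent. For general $t$, $K(z_0)$ is itself a $(t-1)$-path-like ideal in the subgraph rooted at $y_0$, and the Tor-vanishing check for the intersection $\to$ summand inclusions appears to require a nested induction on both $t$ and the shape of $T_i$. In parallel, the auxiliary Lemmas \ref{lem_U}, \ref{lem_V} must be generalized to ideals of the form (monomial linear forms) times (an induced $(t-1)$- or $t$-path ideal); showing that $\reg$ grows by exactly $t-1$ when passing to the subgraphs $G \setminus N[P]$ is the quantitative heart of the argument, and is what ultimately produces the constant $(t-1)\nu_t(G)$ in the conjectured formula.
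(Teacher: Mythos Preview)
The statement you are attempting to prove is labeled as a \emph{conjecture} in the paper, not a theorem; the paper does not provide a proof of it for general $t$. The only cases the paper establishes are $t=2$ (attributed to \cite{ABS}) and $t=3$ (Theorem~\ref{thm_reg}). So there is no ``paper's own proof'' against which to compare your proposal.

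Your outline is exactly the generalization the authors have in mind, and the Remark immediately following the conjecture confirms this: they note that one can still show $z_0$ is a splitting vertex for $I_t(G)$, but that ``finding the intersection corresponding to this splitting is more complicated when $t \ge 4$.'' This is precisely the obstacle you identify in your last paragraph. For $t=3$, the ideal $K(z_0)$ you write down is $y_0 J(z_0)$ with $J(z_0)$ generated by \emph{linear} forms, which is why the intersection in Lemma~\ref{lem_splitting_1} can be computed explicitly and why Lemmas~\ref{lem_U} and~\ref{lem_V} suffice. For $t \ge 4$, $K(z_0)$ is a genuine $(t-1)$-path-type ideal, the intersection $z_0 K(z_0) \cap I_t(G_1)$ no longer decomposes into pieces governed by a single vertex $x_0$ (or the pair $v_2, v_m$), and the iterated Betti splittings of Section~\ref{sec_splittings} do not obviously terminate in ideals whose regularity is already known. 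Your proposal correctly flags this as requiring a nested induction, but gives no mechanism for carrying it out; that mechanism is the missing content, and its absence is why the statement remains a conjecture.

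In short: your plan matches the paper's own suggested strategy, but neither you nor the authors have a proof beyond $t=3$.
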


\begin{rem} \begin{enumerate}
    \item The case $t = 2$ of Conjecture \ref{conj_t} is \cite[Corollary 3.9]{ABS}. The case $t=3$ is Theorem \ref{thm_reg}.
    \item It is possible to establish that $z_0$ is a splitting vertex for $I_t(G)$ where $z_0$ is a leaf of the highest level of $G$. But finding the intersection corresponding to this splitting is more complicated when $t \ge 4$.
\end{enumerate}    
\end{rem}

\vspace{1mm}

\noindent {\bf Data Availability} Data sharing does not apply to this article as no datasets were generated or analyzed during the current study.

\vspace{1mm}

\noindent {\bf Conflict of interest} There are no competing interests of either a financial or personal nature.


\begin{thebibliography}{2}


\bibitem[ABS] {ABS} 
A. Alilooee and S. Beyarslan, S. Selvaraja, 
{\em Regularity of powers of edge ideals of unicyclic graphs}, Rocky Mountain J. Math.  {\bf 49} (2019), 699--728.

\bibitem[AF1]{AF1}
A. Alilooee, S. Faridi, 
{\em On the resolution of path ideals of cycles}, Commun. Algebra {\bf 43}, 5413--5433 (2015).


\bibitem[AF2]{AF2} 
A. Alilooee, S. Faridi, 
{\em Graded Betti numbers of path ideals of cycles and lines}, J. Algebra Appl. {\bf 17}, 1850011, (2018).




\bibitem[B]{B}	
A. Banerjee, 
{\it Regularity of path ideals of gap free graphs},
J. Pure Appl. Algebra {\bf 221} (2017), 2409--2419.


\bibitem[BB]{BB}
R. R. Bouchat and T. M. Brown,
\emph{Multi-graded Betti numbers of path ideals of trees},
J. Algebra Appl. {\bf 16} (2017), 1750018.




\bibitem[BHO]{BHO}
R. R. Bouchat, H. T. Ha, A. O'Keefe,
{\em Path ideals of rooted trees and their graded Betti numbers}, J.
Combin. Theory Ser. A {\bf 118} (2011), 2411--2425.


\bibitem[CD]{CD}
A. Conca, E. De Negri,
\emph{M-sequences, graph ideals, and ladder ideals of linear type},
J. Algebra {\bf 211} (1999), 599--624.


\bibitem[CF] {CF} 
M. Crupi, A. Ficarra, {\em Very well–covered graphs by Betti splittings}, J. Algebra {\bf 629} (2023), 76--108.


\bibitem[CFL]{CFL}
M. Crupi, A. Ficarra, E. Lax, 
{\em Matchings, squarefree powers and Betti splittings}, arXiv:2304.00255.

\bibitem [E]{E} 
N. Erey,
{\em Multigraded Betti numbers of some path ideals}, In: Combinatorial Structures in Algebra and Geometry, Springer Proc. Math. Stat. {\bf 331} (2020), 51--65.



\bibitem[FHV] {FHV} 
C. Francisco, H.T. Ha, and A. Van Tuyl, 
{\it Splittings of monomial ideals}, Proc. Amer. Math. Soc. {\bf 137} (2009), 3271--3282.


\bibitem[HV]{HV}
J. He and A. Van Tuyl,
{\em Algebraic properties of the path ideal of a tree},
Comm. Algebra {\bf 38}, (2010), 1725--1742.

\bibitem[J]{J}
S. Jacques,
{\em Betti numbers of graph ideals,} Ph.D. thesis, The University of Sheffield, arXiv:0410107 (2004).
 

\bibitem[KM] {KM} 
D. Kiani and S. S. Madani,
{\em Betti numbers of path ideals of trees},
Comm. Algebra {\bf 44} (2016), 5376--5394.


\bibitem[M2] {M2} 
D. R. Grayson and M. E. Stillman,
{\em Macaulay2, a software system for research in algebraic geometry,}
Available at: http://www.math.uiuc.edu/Macaulay2/.

 
\bibitem[KS]{KS}
R. Kumar and R. Sarkar,
{\em Regularity of 3-path ideals of trees and unicyclic graphs,} Bull. Malays. Math. Sci. Soc. {\bf 47} (2024).



\bibitem[NV1] {NV1} 
H. D. Nguyen and T. Vu,
{\em Linearity defect of edge ideals and Fr\"oberg’s theorem,} 
	J. Algbr. Comb. {\bf 44} (2016), 165--199.


\bibitem[NV2] {NV2} 
H. D. Nguyen and T. Vu,
{\em Powers of sums and their homological invariants,} 
	J. Pure Appl. Algebra {\bf 223} (2019), 3081--3111.

 
\bibitem[OHH]{OHH}
 H. Ohsugi, J. Herzog, and T. Hibi, \emph{Combinatorial pure subrings,} Osaka J. Math. {\bf 37} (2000), 745--757.



\end{thebibliography}
\end{document}